\newtheorem{theorem}{Theorem}
\newtheorem{proposition}[theorem]{Proposition}
\newtheorem{lemma}[theorem]{Lemma}
\newtheorem{notation}[theorem]{Notation}
\newtheorem{definition}[theorem]{Definition}
\newtheorem{example}[theorem]{Example}
\newtheorem{remark}[theorem]{Remark}
\newtheorem{conjecture}[theorem]{Conjecture}
\newtheorem{problem}[theorem]{Problem}
\newcommand{\defin}[1]{{\emph{#1}}}
\newcommand{\C}{\mathbb{C}}
\newcommand{\al}{\alpha}
\newcommand{\xvec}{\mathbf{x}}
\newcommand{\alphavec}{{\boldsymbol\alpha}}
\newcommand{\infmatA}{{\mathcal{A}}}
\newcommand{\infmatI}{{\mathcal{I}}}
\newcommand{\eigenlocus}{{\mathcal{E}}}
\newcommand{\polybasis}{{\mathfrak{B}}}
\newcommand{\M}{{ Mat}}
\DeclareMathOperator*{\rank}{rank}
\title{Around multivariate Schmidt-Spitzer theorem}
\keywords{asymptotic root distribution, square and rectangular Toeplitz
matrices}
\subjclass[2000]{Primary 15B07; Secondary 34L20, 35P20}
\author[P.~Alexandersson]{Per Alexandersson}
\address{Department of Mathematics,
   Stockholm University,
   S-10691, Stockholm, Sweden}
\email{per@math.su.se}
\author[B.~Shapiro]{Boris Shapiro}
\address{Department of Mathematics,
   Stockholm University,
   S-10691, Stockholm, Sweden}
\email{shapiro@math.su.se}
\begin{document}

\begin{abstract}
Given an arbitrary complex-valued infinite matrix $\infmatA=(a_{ij}),\;
i=1,\dotsc,\infty; j=1,\dotsc,\infty$  and a positive integer $n$ we introduce a
naturally associated  polynomial basis $\polybasis_\infmatA$ of
$\C[x_0,\dotsc,x_n]$.
We discuss some properties of the locus of  common zeros of all polynomials 
in $\polybasis_A$ having  a given degree $m$; the latter locus can be
interpreted as the spectrum of 
the $m\times (m+n)$-submatrix of $\infmatA$ formed by its  $m$ first rows and
$(m+n)$ first columns. 
We initiate the study of the asymptotics of these spectra when $m\to \infty$ in
the case when $\infmatA$ is a banded Toeplitz matrix.
In particular, we present and partially prove a conjectural multivariate analog
of the well-known Schmidt-Spitzer theorem which describes 
 the spectral asymptotics for the sequence of principal minors of an arbitrary
banded Toeplitz matrix.
Finally, we discuss relations between polynomial bases $\polybasis_\infmatA$ and
multivariate  orthogonal polynomials. 
\end{abstract}

\maketitle

\section{Introduction}

The approach of this paper is motivated by the modern interpretation of the 
Heine-Stieltjes multiparameter spectral problem as presented in \cite{Sh} and 
\cite{ShSh}. Let us recall some relevant results in the matrix set-up.  

Given integers $m>0$ and $n\ge 0$ consider the space $\M(m,m+n)$ of 
complex-valued $m \times (m+n)$-matrices. For $s=0,\dots, n$ 
define the $s$-th unit matrix 
$$\infmatI_s := (\delta_{s+i-j})\in \M(m,m+n).$$ 
(In what follows the sizes of matrices can be infinite.)

\begin{definition}[see \cite{ShSh}]
Given a matrix $A\in\M(m,m+n)$ define its \defin{eigenvalue locus}
$\eigenlocus_A$ as
\[
\eigenlocus_A := \left\{ (x_0,x_1,\dotsc,x_n) \in \C^{n+1} \colon \rank \left( A
- \sum_{s=0}^n x_s \infmatI_s \right) < m \right\}.
\]
For $n=0,$ $\eigenlocus_A$ coincides with the usual set of eigenvalues of a
square matrix $A$.
\end{definition}

\begin{proposition}[ Lemma~1 of \cite{ShSh}]\label{prop:eig}
For arbitrary $A\in \M(m,m+n)$ the eigenvalue locus $\eigenlocus_A$ consists of
$\binom{m+n}{n+1}$ points
counting multiplicities. In other words, counting multiplicities there exist 
$\binom{m+n}{n+1}$ eigenvalue tuples $(x_0,x_1,\dotsc,x_n)$  such that  
$A-\sum_{s=0}^{n}x_s \infmatI_s$ has rank smaller than $m$.
\end{proposition}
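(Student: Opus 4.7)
The plan is to realize $\eigenlocus_A$ as the image of an auxiliary incidence variety inside a product of projective spaces, and to count the intersection via a multi-degree Bezout computation.

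First, I would homogenize by introducing an extra variable $t$ and set $M(x,t) := tA - \sum_{s=0}^n x_s \infmatI_s$, so that the entries of $M(x,t)$ are linear forms in $(x_0,\dotsc,x_n,t)$. Consider the incidence variety
\[
\widetilde{\eigenlocus}_A := \left\{ ([y],[x{:}t]) \in \mathbb{P}^{m-1}\times \mathbb{P}^{n+1} \colon y^\top M(x,t) = 0 \right\}.
\]
The condition $y^\top M(x,t)=0$ amounts to $m+n$ equations of bidegree $(1,1)$ in $([y],[x{:}t])$. Since $\dim(\mathbb{P}^{m-1}\times\mathbb{P}^{n+1})=m+n$, Bezout's theorem in the Chow ring of the product predicts that the intersection number, counted with multiplicity, equals the coefficient of $h_y^{m-1}h_x^{n+1}$ in $(h_y+h_x)^{m+n}$, which is $\binom{m+n}{m-1}=\binom{m+n}{n+1}$.

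Second, I would verify that $\widetilde{\eigenlocus}_A$ is disjoint from the divisor $\{t=0\}$, ensuring that all its points lie over the affine chart $\C^{n+1}$. On $t=0$ the equation reads $y^\top \sum_s x_s\infmatI_s=0$; under the identification $y\leftrightarrow Y(z)=\sum_{i=1}^m y_i z^{i-1}$ and $x\leftrightarrow X(z)=\sum_{s=0}^n x_s z^s$, this is equivalent to $Y(z)X(z)\equiv 0$ in $\C[z]$, which forces $Y=0$ or $X=0$, both excluded by the projective normalization. Consequently, the Bezout count $\binom{m+n}{n+1}$ receives no contribution at infinity and is realized entirely inside $\mathbb{P}^{m-1}\times\C^{n+1}$.

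Third, I would descend the count from $\widetilde{\eigenlocus}_A$ to $\eigenlocus_A$ via the natural projection $\pi\colon\widetilde{\eigenlocus}_A\to\eigenlocus_A$. This projection is generically one-to-one, because at a point where $\rank M(x)=m-1$ the left null space is exactly $1$-dimensional, producing a unique $[y]\in\mathbb{P}^{m-1}$. For generic $A$ the bilinear equations meet transversally in $\binom{m+n}{n+1}$ distinct reduced points with pairwise distinct $x$-projections, yielding the result in the generic case.

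The main obstacle I expect is the non-generic case, where rank can drop by more than one and the fiber of $\pi$ is positive-dimensional. I plan to handle this by a semicontinuity/deformation argument: perturb $A$ to $A_\varepsilon$ with $\binom{m+n}{n+1}$ distinct reduced eigenvalue tuples, and use the properness established in the second step (which is uniform in the matrix entries) to keep all intersection points in a compact subset of $\mathbb{P}^{m-1}\times\C^{n+1}$ as $\varepsilon\to 0$. Flatness of the family then forces the total length of the limit scheme to remain $\binom{m+n}{n+1}$, and this scheme coincides with the one cut out by the Fitting ideal of $m\times m$ minors of $M(x)$ on $\eigenlocus_A$, thereby defining the multiplicities appearing in the statement.
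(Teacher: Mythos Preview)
The paper does not actually contain a proof of this proposition: it is quoted as Lemma~1 of \cite{ShSh} and used as input (item~(iii) of Proposition~\ref{prop:basis} simply refers back to it). So there is nothing in the present paper to compare your argument against.

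That said, your approach is the standard and correct one. The incidence variety in $\mathbb{P}^{m-1}\times\mathbb{P}^{n+1}$, the bidegree-$(1,1)$ Bezout count, and the polynomial-multiplication argument showing there are no solutions on $\{t=0\}$ are all fine and cleanly executed. Two comments on the non-generic step. First, your assertion that the limit scheme ``coincides with the one cut out by the Fitting ideal of $m\times m$ minors'' is not literally true in general: the scheme-theoretic pushforward of the incidence variety and the determinantal (Fitting) scheme can differ as schemes. What you actually need, and what your deformation argument does give, is that the \emph{length} of the determinantal scheme is constant in the flat family over the space of matrices, hence equals the generic value $\binom{m+n}{n+1}$; you should phrase it that way. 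Second, you should note explicitly that the incidence variety can be positive-dimensional for special $A$ (e.g.\ $A=0$ gives $\mathbb{P}^{m-1}\times\{0\}$), so the Bezout count upstairs is not a count of points there; the multiplicity on $\eigenlocus_A$ must therefore be defined downstairs via the minors, exactly as you do, and the upstairs picture serves only to compute the generic value.
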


\begin{remark}
Notice that for $n>0,$ the locus  $\eigenlocus_A$  is not a complete 
intersection since it is given by the vanishing of all maximal minors of $A$. 
(A similar phenomenon can be observed for common zeros of 
multivariate Schur polynomials, since Schur polynomials are given by determinant
formulas.) 
\end{remark}

\begin{notation}\label{deff}
Given an infinite matrix $\infmatA=(a_{ij}),\; i=1,\dotsc,\infty;
j=1,\dotsc,\infty$, an integer $n\geq 0$, and an $m$-tuple of positive integers
$I=(i_1,i_2,\dots, i_m)$  satisfying $1\le i_1<i_2<\dotsc <i_m\leq m+n$,   
consider the submatrix $A_I$ of $\infmatA - \sum_{s=0}^n x_s \infmatI_s$ formed
by the first $m$ rows and $m$ columns indexed by $I.$
Define 
\begin{equation}\label{eq:main}
P^I_\infmatA(x_0,x_1,\dotsc,x_n):=\det A_I.
\end{equation}
\end{notation}
Evidently, $P^I_\infmatA(x_0,\dotsc,x_n)$ is a maximal minor of the principal $m
\times (m+n)$ submatrix  of $\infmatA- \sum_{s=0}^n x_s \infmatI_s$ formed by
its $m$ first rows and $m+n$ first columns. Therefore
$P^I_\infmatA(x_0,\dotsc,x_n)$ is  a polynomial in $x_0,\dotsc,x_n.$

\begin{proposition}\label{prop:basis}
In the above notation the
following holds:
\begin{enumerate}

\item[(i)] for any multiindex $I$ with 
$|I|=m$, $\deg P^{I}_\infmatA(x_0,\ldots, x_n)=m$; 

\noindent
\item[(ii)] all $\binom{m+n}{m}$ polynomials 
$P^I_\infmatA(x_0,...,x_n)\in \C[x_0,\dots,x_n]$ with $|I|=m$ are linearly 
independent which implies that the totality of all polynomials 
$P^I_\infmatA(x_0,...,x_n)$ is a linear basis of $\C[x_0,\dots,x_n]$; 

\noindent
\item[(iii)] the set $\eigenlocus_\infmatA^{(m)}$ of 
common zeros of all  $P^I_\infmatA(x_0,...,x_n)$ with $|I|=m$ 
is a finite subset of $\C^{n+1}$ of cardinality $\binom{m+n}{n+1}$ counting
multiplicities.  
\end{enumerate}
\end{proposition}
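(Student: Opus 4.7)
Part~(iii) is a direct corollary of Proposition~\ref{prop:eig}: letting $A$ denote the principal $m\times(m+n)$ submatrix of $\infmatA$ (first $m$ rows, first $m+n$ columns), the polynomials $P^I_\infmatA$ with $|I|=m$ are exactly the maximal minors of $A-\sum_s x_s\infmatI_s$, so their common vanishing locus coincides with $\eigenlocus_A$ and hence has cardinality $\binom{m+n}{n+1}$ counting multiplicities.

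For (i) and (ii) my plan is to reduce everything to the leading homogeneous part. Since every entry of $A_I$ is affine in $x_0,\dotsc,x_n$, one has $\deg P^I_\infmatA\le m$ and the degree-$m$ component equals $(-1)^m Q^I$, where
\[
Q^I \;=\; \det\!\Bigl(\textstyle\sum_{s=0}^n x_s\,\infmatI_s\Bigr)\Big|_I \;=\; \sum_{\sigma\in S_m}\mathrm{sgn}(\sigma)\prod_{k=1}^m x_{j_{\sigma(k)}-k},
\]
with $I=\{j_1<\dotsb<j_m\}$ and the convention $x_s:=0$ for $s\notin\{0,\dotsc,n\}$. The identity permutation isolates the monomial $\mathbf{x}^I:=\prod_k x_{j_k-k}$, which is a genuine degree-$m$ monomial since $0\le j_k-k\le n$. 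The assignment $I\mapsto\mathbf{x}^I$ is a bijection between $m$-subsets of $\{1,\dotsc,m+n\}$ and degree-$m$ monomials in $x_0,\dotsc,x_n$, both families being counted by $\binom{m+n}{m}$.

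The central technical step is a triangularity lemma. Associate to each $I$ the partition $\lambda(I):=(j_m-m,\dotsc,j_1-1)$; I claim every monomial $\mathbf{x}^{I'}$ occurring in $Q^I$ satisfies $\lambda(I')\ge\lambda(I)$ in dominance order, with strict inequality whenever $I'\ne I$. All monomials in $Q^I$ share the common $x$-weight $\sum_k j_k-\binom{m+1}{2}$, so only $I'$ with $|\lambda(I')|=|\lambda(I)|$ can contribute and dominance is well-defined. For the proof, given a valid $\sigma\in S_m$ let $\lambda^\sigma$ denote the partition obtained by sorting $\{j_{\sigma(k)}-k\}_k$ decreasingly, and set $T_K:=\sigma^{-1}(\{m-K+1,\dotsc,m\})$; then
\[
\sum_{i=1}^K\lambda^\sigma_i \;\ge\; \sum_{k\in T_K}\bigl(j_{\sigma(k)}-k\bigr) \;=\; \sum_{l=m-K+1}^m j_l-\sum_{k\in T_K}k \;\ge\; \sum_{l=m-K+1}^m j_l-\sum_{k=m-K+1}^m k \;=\; \sum_{i=1}^K\lambda(I)_i,
\]
using that any size-$K$ subset of $\{1,\dotsc,m\}$ has sum at most $\sum_{k=m-K+1}^m k$. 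Simultaneous equality for every $K$ forces $T_K=\{m-K+1,\dotsc,m\}$, i.e.\ $\sigma$ preserves every tail set of $\{1,\dotsc,m\}$, which inductively gives $\sigma=\mathrm{id}$. Thus $\mathbf{x}^I$ occurs in $Q^I$ with coefficient $+1$ and every other contributing monomial corresponds to a strictly larger partition; in particular $Q^I\ne 0$, which proves (i).

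Given the triangularity, the matrix expressing $\{Q^I\}_{|I|=m}$ in the monomial basis $\{\mathbf{x}^{I'}\}_{|I'|=m}$ is upper-unitriangular once the indices are ordered compatibly with $\lambda$ weakly decreasing, so the $Q^I$ form a basis of the degree-$m$ homogeneous polynomials in $n+1$ variables. Any relation $\sum_{|I|=m}c_I P^I_\infmatA=0$ projects to $\sum c_I Q^I=0$ at the top degree, forcing all $c_I=0$; the same top-degree trick applied inductively in $m$ yields linear independence of the full family $\{P^I_\infmatA\}_I$. Since $\sum_{k=0}^m\binom{k+n}{n}=\binom{m+n+1}{n+1}=\dim\C[x_0,\dotsc,x_n]_{\le m}$ by the hockey-stick identity, this independent family is automatically a basis of $\C[x_0,\dotsc,x_n]$. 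The main obstacle is the dominance inequality underlying the triangularity lemma; everything else is formal manipulation with leading terms and a counting check.
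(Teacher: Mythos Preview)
Your argument is correct and follows the same overall strategy as the paper: isolate the degree-$m$ homogeneous part of each $P^I_\infmatA$, identify the diagonal monomial $\mathbf{x}^I=\prod_k x_{j_k-k}$, and establish that the transition matrix to the monomial basis is unitriangular with respect to some order on the index sets $I$. Part~(iii) is handled identically, by citing Proposition~\ref{prop:eig}.

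The one substantive difference is in the triangularity lemma itself. The paper orders the $I$'s lexicographically and simply asserts that the off-diagonal monomials $\mathfrak{m}_{I'}$ appearing in $\widetilde P^I_\infmatA$ are all lexicographically smaller, calling this ``straightforward.'' You instead prove the sharper statement that every monomial $\mathbf{x}^{I'}$ occurring in $Q^I$ satisfies $\lambda(I')\ge\lambda(I)$ in \emph{dominance} order (and hence also in lex order), and you give a clean proof via the partial-sum inequality with the tail sets $T_K$. Your observation that all monomials in $Q^I$ share the same $x$-weight $\sum_k j_k-\binom{m+1}{2}$ is an additional refinement the paper does not mention. So your route is the same in outline but strictly more informative at the key technical step, and it fills in the details the paper leaves to the reader.
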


\begin{remark}\label{rem:2} Notice that for  $\binom {m+n}{m}$ randomly chosen  
polynomials  in $\C[x_0,x_1\ldots, x_n]$  of degree $m$ the set of their common
zeros is typically empty. 
\end{remark}

Proposition \ref{prop:basis} together with our 
numerical experiments motivate the following  question. 

\medskip
 Given an arbitrary infinite matrix $\infmatA$ as above, associate to each 
$\eigenlocus_\infmatA^{(m)}$ its ``root-counting'' measure $\mu_\infmatA^{(m)}$ 
supported on $\eigenlocus_\infmatA^{(m)}\subset \C^{n+1}$ by assigning to every 
point $p\in \eigenlocus_\infmatA^{(m)}$ the point mass ${\kappa(p)}/{\binom
{m+n}{n+1}}$
where $\kappa(p)$ is the multiplicity of $p$. 
(Obviously, $\mu_\infmatA^{(m)}$ is a discrete probability measure.)  

\medskip\noindent
{\bf Main Problem.} \label{prob:1} Under which assumptions on $\infmatA$ does the 
weak limit $\mu_{\infmatA}=\lim_{m\to\infty}\mu_{\infmatA}^{(m)}$ exists? 
In case when $\mu_{\infmatA}$ exists, 
is it possible to describe the support and density of the measure?

\medskip

In the classical case $n=0$, the above problem was intensively studied by many 
authors. The main focus has been when $\infmatA$ is either a Jacobi or a Toeplitz matrix 
(or their generalizations such as block-Toeplitz matrices etc.), see e.g.
\cite{BS, BG, Wi1, Wi2}.

The main goal of this note is to present  a multivariate analogue of the 
well-known  theorem by P.~Schmid and F.~Spitzer \cite{SS},
where they describe $\mu_{\infmatA}$ for an arbitrary banded Toeplitz matrix $\infmatA$  
in the case $n=0.$

Namely, let $\infmatA = (c_{i-j}),$ with $i,j=1,2,\dotsc$ be an infinite, 
banded Toeplitz matrix, where $c_i=0$ if $i<-k$ or $i>h.$
Fixing $n\ge 0$ as above, we obtain for each positive integer $m$ the 
eigenvalue locus $\eigenlocus_\infmatA^{(m)}$ of the 
principal $m \times (m +n)$ submatrix $A^{(m)}$ of $\infmatA$.  

\medskip
Define the \defin{limit set $B_{\infmatA}$ of eigenvalue loci} as
\begin{equation}\label{eqn:Bdef2}
B_\infmatA = \left\{ \xvec \in \C^{n+1} \colon \xvec=\lim_{m \rightarrow \infty}
\xvec_m, \xvec_m \in \eigenlocus_\infmatA^{(m)}
\right\},\; \xvec = (x_0,\dotsc,x_n).
\end{equation}
In other words, $B_\infmatA$ is the set of limit points of the sequence  
$\{\eigenlocus_\infmatA^{(m)}\}$. Thus $B_\infmatA$ is the support of the 
limiting measure $\mu_\infmatA$ if it exists. 
(For a general infinite matrix $\infmatA$ as above, 
its limit set $B_\infmatA$ might be empty.)

Set 
\begin{align}\label{eq:symbol}
Q(t,\xvec) = t^k\left(\sum_{j=-k}^h c_j t^j -\sum_{j=0}^n x_j t^j \right),  
\end{align}
and let 
$\alpha_1(\xvec),\alpha_2(\xvec),\dots,\alpha_{k+h}(\xvec)$ be the  roots of
$Q(t,\xvec)=0,$
ordered according to their absolute values, i.e.  $|\alpha_i(\xvec)|\leq
|\alpha_{i+1}(\xvec)|$ for all $0<i<k+h.$
Let $C_\infmatA $ be the real semi-algebraic set given by the condition: 
\begin{equation}\label{eqn:Cdef2}
C_\infmatA = \{ \xvec \in \C^{n+1} \colon
|\alpha_k(\xvec)|=|\alpha_{k+1}(\xvec)|=\dots=|\alpha_{k+n+1}(\xvec)| \}. 
\end{equation}

Our main conjecture is as follows. 

\begin{conjecture}\label{conj:main} 
For any banded Toeplitz matrix $\infmatA$, 
if $B_\infmatA$ is defined by \eqref{eqn:Bdef2} and $C_\infmatA$ 
defined by  \eqref{eqn:Cdef2} then $B_\infmatA=C_\infmatA$.
\end{conjecture}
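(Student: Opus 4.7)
The natural plan is to adapt Hirschman's asymptotic analysis of the classical ($n=0$) Schmidt--Spitzer theorem to this rectangular set-up. The foundation is a Widom-type identity that expresses each maximal minor $P^I_\infmatA(\xvec)$ as an explicit sum over the $k+h$ roots $\alpha_j(\xvec)$ of $Q(t,\xvec)$. Using a transfer-matrix (or Lindstr\"om--Gessel--Viennot) factorization applied to the banded Toeplitz stripe, one should obtain
\begin{equation*}
P^I_\infmatA(\xvec) \;=\; \sum_{\substack{S\subseteq\{1,\dots,k+h\}\\ |S|=k}} C_{I,S}(\xvec)\prod_{j\in S}\alpha_j(\xvec)^{m},
\end{equation*}
with rational, $m$-independent coefficients $C_{I,S}$ that encode the choice of columns $I$ through a minor of a generalized Vandermonde matrix. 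Then $\xvec\in\eigenlocus_\infmatA^{(m)}$ becomes the joint vanishing of $\binom{m+n}{m}$ such linear combinations of the products $\prod_{j\in S}\alpha_j(\xvec)^{m}$.

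For the inclusion $B_\infmatA\subseteq C_\infmatA$ I would argue by contradiction. If $\xvec^0\notin C_\infmatA$, there exists $s\in\{0,\dots,n\}$ with $|\alpha_{k+s}(\xvec^0)|<|\alpha_{k+s+1}(\xvec^0)|$, so a strict gap separates two blocks of roots. A combinatorial inspection of the Widom sum shows that one can choose $I$ for which a unique subset $S$ realises the dominant modulus and the corresponding coefficient $C_{I,S}(\xvec^0)\neq 0$. By continuity the same remains true on a neighbourhood of $\xvec^0$, hence $|P^I_\infmatA(\xvec)|$ grows like $\prod_{j\in S}|\alpha_j(\xvec)|^{m}$ along any sequence $\xvec_m\to\xvec^0$ and cannot vanish, forcing $\xvec^0\notin B_\infmatA$.

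The reverse inclusion $C_\infmatA\subseteq B_\infmatA$ is the delicate one. On $C_\infmatA$ the moduli $|\alpha_k|=\dots=|\alpha_{k+n+1}|$ coincide, so in each Widom sum the $n+2$ leading monomials form a comparable oscillating family. I would fix $\xvec^0$ on the smooth stratum of $C_\infmatA$ and study the $(n+1)$-parameter family of arguments
\[
\bigl(\arg(\alpha_{k}/\alpha_{k+1})(\xvec),\dots,\arg(\alpha_{k+n}/\alpha_{k+n+1})(\xvec)\bigr),
\]
which generically submerses onto a real $(n+1)$-torus. Multiplication by $m$ makes this map rapidly winding, and a multidimensional Rouch\'e/winding-number argument on the appropriate toric chart should produce $\binom{m+n}{n+1}$ simultaneous solutions to the truncated system near $\xvec^0$, matching the count in Proposition~\ref{prop:eig}. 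A standard approximation argument then upgrades these truncated solutions to honest zeros $\xvec_m\in\eigenlocus_\infmatA^{(m)}$ with $\xvec_m\to\xvec^0$.

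The main obstacle will be making the winding argument rigorous off the smooth stratum of $C_\infmatA$: points where two balancing roots collide, where further roots $\alpha_{k-1}$ or $\alpha_{k+n+2}$ share the common modulus, or where the argument map fails to submerse. Since $\eigenlocus_\infmatA^{(m)}$ is cut out by $\binom{m+n}{m}$ polynomials and is far from a complete intersection for $n\ge 1$, one cannot invoke Bernstein-type bounds to localize the count, and any partial result along these lines will almost certainly cover only the generic stratum of $C_\infmatA$, leaving its singular loci as the principal open issue.
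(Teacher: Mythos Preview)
The statement you are attempting to prove is labelled a \emph{conjecture} in the paper, and the paper does not claim to prove it. What the paper actually establishes is only the inclusion $B_\infmatA\subseteq C_\infmatA$ (Theorem~\ref{th:main}); the reverse inclusion $C_\infmatA\subseteq B_\infmatA$ is left open. So the correct benchmark for your proposal is not ``does it match the paper's proof of the conjecture'' but rather ``does the forward half match the paper's proof of Theorem~\ref{th:main}, and does the reverse half actually close the gap the paper leaves.''

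For the inclusion $B_\infmatA\subseteq C_\infmatA$, your strategy (Widom-type expansion plus a dominating-term contradiction) is the same idea the paper uses, but your execution has a slip. With your formula the sum runs over subsets $S$ of \emph{fixed} size $k$, so the term of maximal modulus is always governed by the $k$ largest roots, and its uniqueness detects only a gap at position $h$, not at an arbitrary position $k+s$ with $0\le s\le n$. The paper avoids this by working not with all maximal minors but with just the $n{+}1$ consecutive-column minors $D^m_j$, $j=0,\dots,n$. Each sequence $\{D^m_j\}_m$ satisfies a linear recurrence whose characteristic roots are, up to sign, the reciprocals of products $\prod_{i\in\sigma}\alpha_i$ over subsets $\sigma$ of size $k{+}j$; hence the dominant root is unique precisely when $|\alpha_{k+j}|<|\alpha_{k+j+1}|$. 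If $\xvec^*\notin C_\infmatA$ then such a strict gap occurs for some $j$, and the standard recurrence argument forces $D^m_j(\xvec_m)\neq 0$ for large $m$, contradicting $\xvec_m\in\eigenlocus_\infmatA^{(m)}$. This is both cleaner and sharper than trying to pick a clever $I$ in the full Widom sum.

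For the inclusion $C_\infmatA\subseteq B_\infmatA$, the paper offers nothing beyond the Chebyshev example, so here you are genuinely on your own. Your torus/winding-number heuristic is a plausible line of attack on the smooth stratum, but as you yourself flag, the argument is incomplete: the submersion of the argument map needs to be verified, the Rouch\'e step must produce zeros of the \emph{full} system of $\binom{m+n}{m}$ minors (not merely of a truncated subsystem), and the singular strata of $C_\infmatA$ --- where extra roots share the common modulus or where roots collide --- are not handled. None of this is addressed in the paper either; what you have written is a program, not a proof, and the conjecture remains open.
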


By Conjecture~\ref{conj:main} the set $B_\infmatA$ is a 
real semi-algebraic $(n+1)$-dimensional subset of $\C^{n+1}.$ 
 In the classical case $n=0$, 
Conjecture~\ref{conj:main} is settled by P.~Schmidt and F.~Spitzer in
\cite{SS}. 
Another important case when Conjecture~\ref{conj:main} has been proved follows from
some known results on multivariate Chebyshev polynomials, 
which is is presented in  Example~\ref{ex:A} below. 
 Namely, when $k=1$ and $h=n+1$ with $c_{-1}$ and $c_{n+1}$ non-zero,
we may do a affine change of the variables and a scaling of $\infmatA$.
This reduces to the latter case to $c_{-1}=c_{n+1}=1$ and all other $c_i=0.$

For these particular values, the family $\{P_\infmatA^I(\xvec)\}$ 
becomes the multivariate Chebyshev polynomials of the second kind, see e.g.
\cite{DL,KO, BE,Xu}.
These polynomials also have a close connection to another well-known family of
polynomials, namely the Schur polynomials. 

\noindent
\begin{example}\label{ex:A}
{\rm  For the above matrices corresponding 
to the multivariate Chebyshev polynomials the eigenlocus 
$\eigenlocus_\infmatA^{(m)}$ can be described explicitly, 
see for example \cite{GE}. 

More precisely, the points in $\eigenlocus_\infmatA^{(m)}$ lie on
a real, $n$-dimensional surface $C_\infmatA\subset \C^{n+1}$
which is naturally parametrized by  an $(n+1)$-dimensional torus $T^{n+1}$.
This parametrization is given by 
\begin{align}\label{eq:chebylocus}
 C_\infmatA = \left\{ \xvec \in \C^{n+1} | 
x_j =
-e_{j+1}\left(\exp(i\theta_1),\dotsc,\exp(i\theta_{n+1}),\exp(i\theta_{n+2}
)\right)\right\}
\end{align}
where $(\theta_1,\dotsc,\theta_{n+1})$ lie on $T^{n+1},$ $\sum_{j=0}^{n+2}
\theta_j =0,$ and 
$e_j$ is the $j$-th elementary symmetric function in $n+2$ variables.

Notice that for $\xvec \in C_\infmatA,$
\begin{align}
Q(t,\xvec) &= 1+x_0 t + x_1 t^2 + \dotso + x_n t^{n+1} + t^{n+2}= \\
           &= \prod_j (t+ e^{i\theta_{j}})
\end{align}
by the Vieta formulas. Thus, for $\xvec \in C_\infmatA,$ 
\emph{all} roots of $Q$, (as a polynomial in $t$) 
have absolute value equal to  1 when the $x_j$ are parametrized as in
\eqref{eq:chebylocus}. 

Furthermore, the points in $\eigenlocus_\infmatA^{(m)}$ 
are also  expressed by  \eqref{eq:chebylocus}, 
with the parameters $(\theta_1,\dots, \theta_{n+2})$ being certain rational
multiples of $\pi,$
distributed in a regular lattice.
The mapping from the $2$-torus to the eigenlocus is illustrated in
Fig.~\ref{fig1}.

Another interesting aspect of Example~\ref{ex:A} is that all the points
$\xvec=(x_0,\dots, x_n)$  
in the sets $\eigenlocus_\infmatA^{(m)}$
satisfy the conditions $x_j = \overline{x_{n-j}}$, $j=0,1,\dots,n,$
which explains why we can draw $C_\infmatA \subset \C^2$ in Fig.~\ref{fig1}a as 
a 2-dimensional set. 
For larger $n$, $C_\infmatA$ is a   $(n+1)$-dimensional analogue of the 
two-dimensional deltoid, shown in Fig.~\ref{fig1}a.}
\end{example}

For general $\infmatA$, we do not have the inclusion $\eigenlocus_\infmatA^{(m)}
\subseteq C_\infmatA$ for arbitrary finite $m$. 
However, if $\infmatA$ has an additional  extra symmetry, this seems to be case. 

\begin{definition}
A banded Toeplitz matrix such that its $Q(t,\xvec)$ in \eqref{eq:symbol}
satisfies 
\[
\overline{Q(t,x_0,x_1,\dotsc,x_n)} =
\overline{t}^{h+k-1}Q(1/\overline{t},\overline{x}_n,\overline{x}_{n-1},\dotsc,
\overline{x}_0) 
\]
is called \defin{multihermitian} of order $n$.
\end{definition}

\begin{conjecture}\label{conj:conjugates} 
If $\infmatA$ is multihermitian of order $n$, then 
each point $\xvec=(x_0,x_1,\dotsc,x_n) \in \eigenlocus_\infmatA^{(m)}$
satisfies $x_j = \overline{x_{n-j}}$ for $j=0,1,\dotsc,n.$
\end{conjecture}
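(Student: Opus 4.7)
The plan is to first translate the symbol-level multihermitian symmetry into a matrix identity relating the pencil $M(\xvec):=\infmatA^{(m)}-\sum_{s=0}^{n}x_s\infmatI_s$ to $M(\tilde\xvec)$, where $\tilde x_j:=\overline{x_{n-j}}$. Since the coefficient of $t^i$ in $Q(t,\xvec)$ records (up to shift) the $(i-k)$-th diagonal of $M(\xvec)$, I would compare the two sides of $\overline{Q(t,\xvec)}=\overline{t}^{h+k-1}Q(1/\overline{t},\tilde\xvec)$ diagonal by diagonal. This should give (i) a bandwidth compatibility between $h$, $k$, and the overhang $n$, and (ii) a conjugate-palindromic identity on the $c_j$ (essentially $\overline{c_j}=c_{n-j}$ on the support). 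These coefficient identities then translate into a matrix identity of the form
\[
T\,\overline{M(\xvec)}\,S=M(\tilde\xvec),
\]
for explicit sparse transformations $T$, $S$ built from the anti-diagonal reversal matrices $J_r$, possibly composed with a column shift to realign the $n$-column overhang.

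Taking determinants of $m\times m$ column submatrices in this identity yields
\[
\overline{P^I_\infmatA(\xvec)}=\pm\,P^{I'}_\infmatA(\tilde\xvec)
\]
for an explicit involution $I\leftrightarrow I'$ on $m$-element subsets of $\{1,\dots,m+n\}$ (concretely, $I'=\{m+n+1-i : i\in I\}$, or a small variant of this depending on the column shift from Stage~1). Because $I\mapsto I'$ is a bijection on $m$-subsets, the simultaneous vanishing of every $P^I_\infmatA(\xvec)$ is equivalent to the simultaneous vanishing of every $P^{I'}_\infmatA(\tilde\xvec)$. This gives the set-level invariance $\xvec\in\eigenlocus_\infmatA^{(m)}\iff \tilde\xvec\in\eigenlocus_\infmatA^{(m)}$ of the finite locus from Proposition~\ref{prop:basis}.

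The crux, and the expected main obstacle, is to upgrade this set-level invariance to the stronger statement that every point of the finite set $\eigenlocus_\infmatA^{(m)}$ is a fixed point of $\xvec\mapsto\tilde\xvec$, i.e.\ to exclude $2$-orbits. I would try to imitate the classical $n=0$ Hermitian argument: at $\xvec\in\eigenlocus_\infmatA^{(m)}$ the matrix $M(\xvec)$ has a nontrivial left null vector $v$, and because $M(\xvec)$ is $m\times(m+n)$ its right kernel is at least $(n+1)$-dimensional. Combining the matrix identity of Stage~1 with the pairing of $v$ against a basis $w_0,\dots,w_n$ of the right kernel should produce $n+1$ sesquilinear relations of the shape $(x_j-\overline{x_{n-j}})\,\langle v,w_j\rangle = 0$; a non-degeneracy argument, ruling out that all of the pairings $\langle v,w_j\rangle$ can simultaneously vanish, would then force $x_j=\overline{x_{n-j}}$ coordinate by coordinate. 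When $n=0$ this collapses to the familiar identity $\lambda\|v\|^2=\overline{\lambda}\|v\|^2$. For $n\ge 1$ the corresponding non-degeneracy is genuinely new, and identifying the correct sesquilinear form on (left kernel)$\,\times\,$(right kernel) together with a proof that it is nondegenerate uniformly in $\xvec$ is where I expect the real difficulty to lie.
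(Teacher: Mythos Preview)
The statement you are attempting is labeled a \emph{Conjecture} in the paper, and the paper offers no proof of it. The authors only remark that the case $n=0$ reduces to the fact that Hermitian matrices have real eigenvalues, that the Chebyshev case of Example~\ref{ex:A} can be checked directly, and that they have numerical evidence; they also observe that the analogous symmetry holds for points of $C_\infmatA$. There is therefore no proof in the paper to compare your argument against.

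As for your proposal on its own merits: Stages~1 and~2 are sound and do establish the set-level invariance $\xvec\in\eigenlocus_\infmatA^{(m)}\iff\tilde\xvec\in\eigenlocus_\infmatA^{(m)}$ under the involution $\tilde x_j=\overline{x_{n-j}}$. This is a correct and worthwhile observation.

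Your Stage~3, however, is not a proof, and the gap you flag is genuine rather than a routine detail. Invariance of a finite set under an involution does not force every point to be fixed; it only forces the orbits to have size $1$ or $2$, and ruling out $2$-cycles is the entire content of the conjecture beyond the symmetry. Your proposed sesquilinear mechanism does not, as written, produce the separated relations $(x_j-\overline{x_{n-j}})\langle v,w_j\rangle=0$. From the matrix identity $T\,\overline{M(\xvec)}\,S=M(\tilde\xvec)$ one obtains that $\bar v\,T^{-1}$ is a left null vector of $M(\tilde\xvec)$, which is again only the set-level statement. To isolate each coordinate $x_j$ you would need $n+1$ independent scalar identities, but the obvious pairings of a left null vector of $M(\xvec)$ against right null vectors of the \emph{same} $M(\xvec)$ are identically zero and say nothing about $\tilde\xvec$; and pairings across $M(\xvec)$ and $M(\tilde\xvec)$ do not visibly decouple variable by variable. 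Even granting a suitable form, the nondegeneracy you would need is exactly what has no analogue of the single positive term $\|v\|^2$ from the $n=0$ case. In short, you have proved a strictly weaker statement than the conjecture, and the step you yourself identify as the crux remains open, just as it does in the paper.
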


Conjecture~\ref{conj:conjugates} obviously holds  for the case $n=0,$ 
as it reduces to the fact that hermitian matrices have real eigenvalues.
It is also straightforward to check that Conjecture \ref{conj:conjugates} 
is true for the Chebyshev case of Example~\ref{ex:A} above.

We have extensive numerical evidence for this conjecture.
Another strong indication supporting Conjecture~\ref{conj:conjugates}  is that if $\infmatA$ is multi-hermitian,
then  every point 
$\xvec \in C_\infmatA$ (which by Conjecture \ref{conj:main} is in the limit
eigenlocus) satisfies 
the required symmetry $x_j = \overline{x_{n-j}}$ for $j=0,1,\dotsc,n.$


\begin{figure}[ht!]
\begin{subfigure}[b]{0.48\textwidth}
\centering
  \includegraphics[width=1\textwidth]{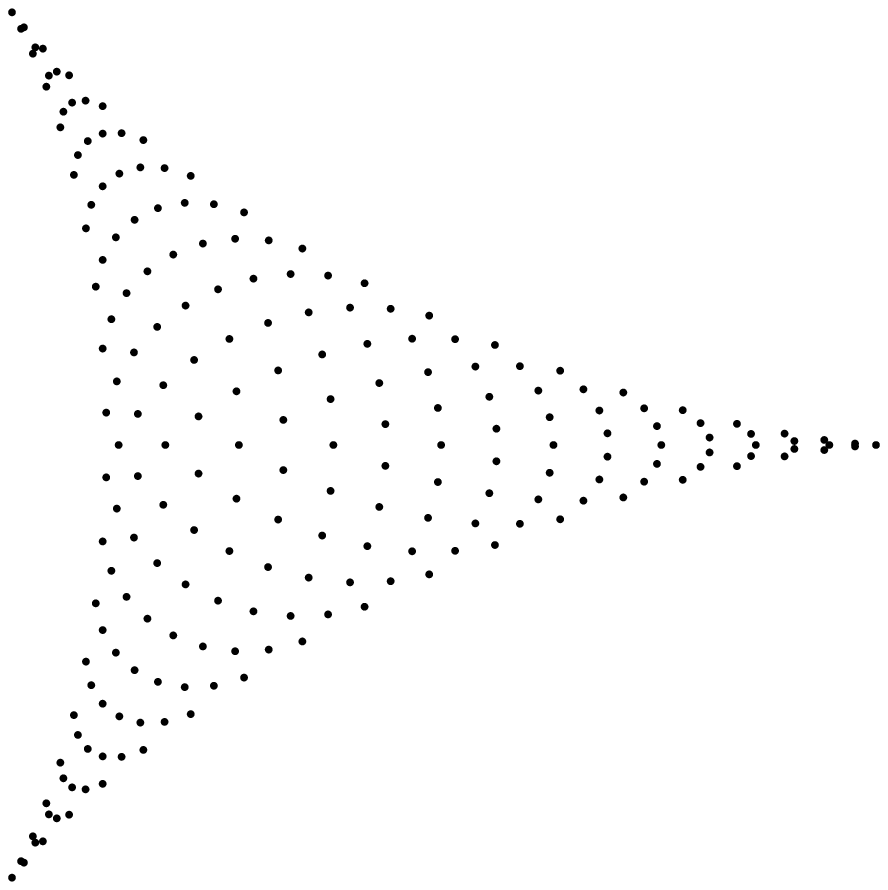}
\end{subfigure}
 \hfill
\begin{subfigure}[b]{0.48\textwidth}
\includegraphics[width=1\textwidth]{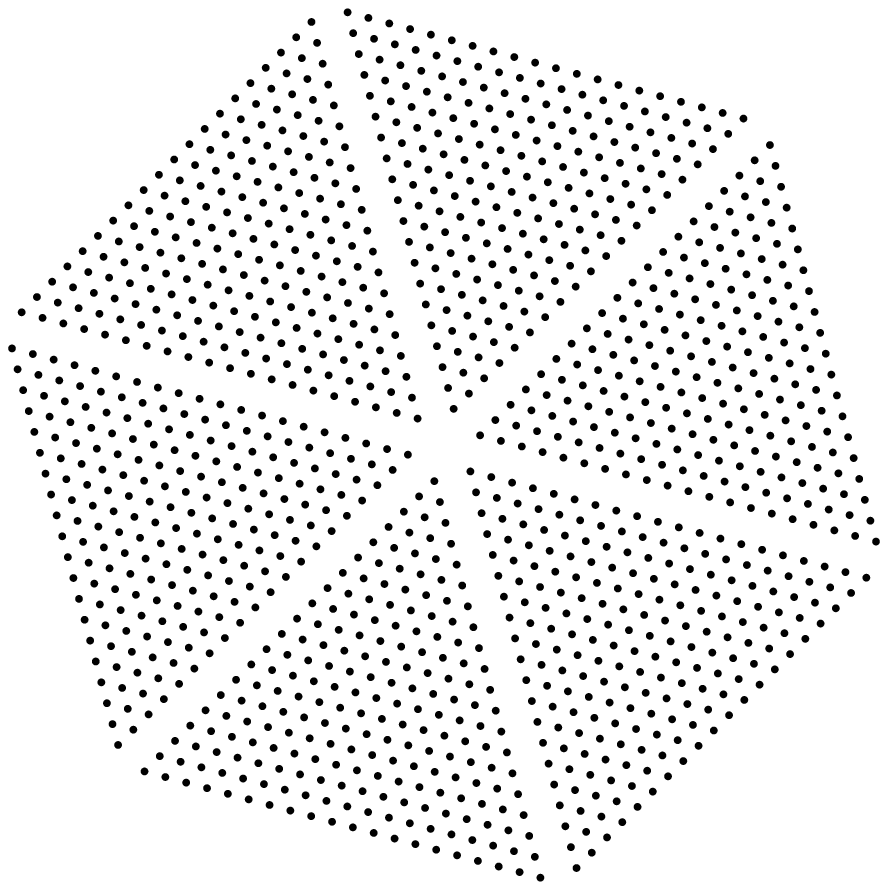}
\end{subfigure}
\caption{
The eigenvalue locus  $\eigenlocus_2^{(20)}$ and its pull-back to $T^2$.
The torus $T^2$ is covered with a hexagon, 
where each triangle is mapped to the eigenlocus. 
The 6-fold symmetry is due to the $S_3$-action by permutation of the 
arguments $\theta_1,\theta_2,\theta_3$ in \eqref{eq:chebylocus}.
(Notice $\theta_1+\theta_2+\theta_3=0$ and this is the subspace which is 
illustrated in the figure to the right.)
}
 \label{fig1}
\end{figure}


The next group of examples are bivariate analogues of 
 special univariate cases originally studied in \cite{SS}, 
and later in \cite{BG}, where they are referred to as ``star-shaped curves'':
\begin{example}\label{ex:B}
{\rm The bivariate case when 
$Q(t,\xvec) = 1 + t^d x_0 + t^{d+1}x_1 + t^{2d+1}$, $d\geq 1$
gives sets in  $\mathbb{C}^2$ where $x_0 = \overline{x_1}$, 
by Conjecture \ref{conj:conjugates}. They correspond to Toeplitz matrices of the
form
\[
\begin{pmatrix}
x_0 & x_1 & 1 & 0 & 0 & \cdots \\
1 & x_0 & x_1 & 1 & 0 & \cdots \\
0 & 1 & x_0 & x_1 & 1 & \cdots \\ 
\vdots & \vdots & \vdots & \vdots & \ddots
\end{pmatrix}, \; 
\begin{pmatrix}
x_0 & x_1 & 0 &1 & 0 & 0 & \cdots \\
0 & x_0 & x_1 & 0 & 1 & 0 & \cdots \\
1 & 0 & x_0 & x_1 & 0 & 1 & \cdots \\ 
\vdots & \vdots & \vdots & \vdots& \vdots & \ddots
\end{pmatrix}, \dotsc
\]
The above two matrices represent  $d=1$ and $d=2.$

Figures~\ref{fig2} and \ref{fig3} present the distributions of $x_0 \in
\mathbb{C},$ for $d=2,3,4$. (Recall that $x_1=\bar{x}_0$.) 
The points shown on these figures belong to  $\eigenlocus_\infmatA^{(m)}$ for
$m=13,14,15$,
and the curves are certain hypocycloids, parametrizing the boundary of
$C_\infmatA$.
More explicitly, for a given integer $d\ge 1$ the hypocycloid boundary for $x_0
\in \mathbb{C}$ is given by
\[
x_0 = (-1)^d e^{-i (d+2) \theta} \left((d+2) e^{i (2 d+3) \theta}+d+1\right)
\text{ where } \theta \in [0,2\pi],
\]
which is one of the implications of Conjecture \ref{conj:main}.
\begin{figure}
\begin{center}
\includegraphics[scale=0.45]{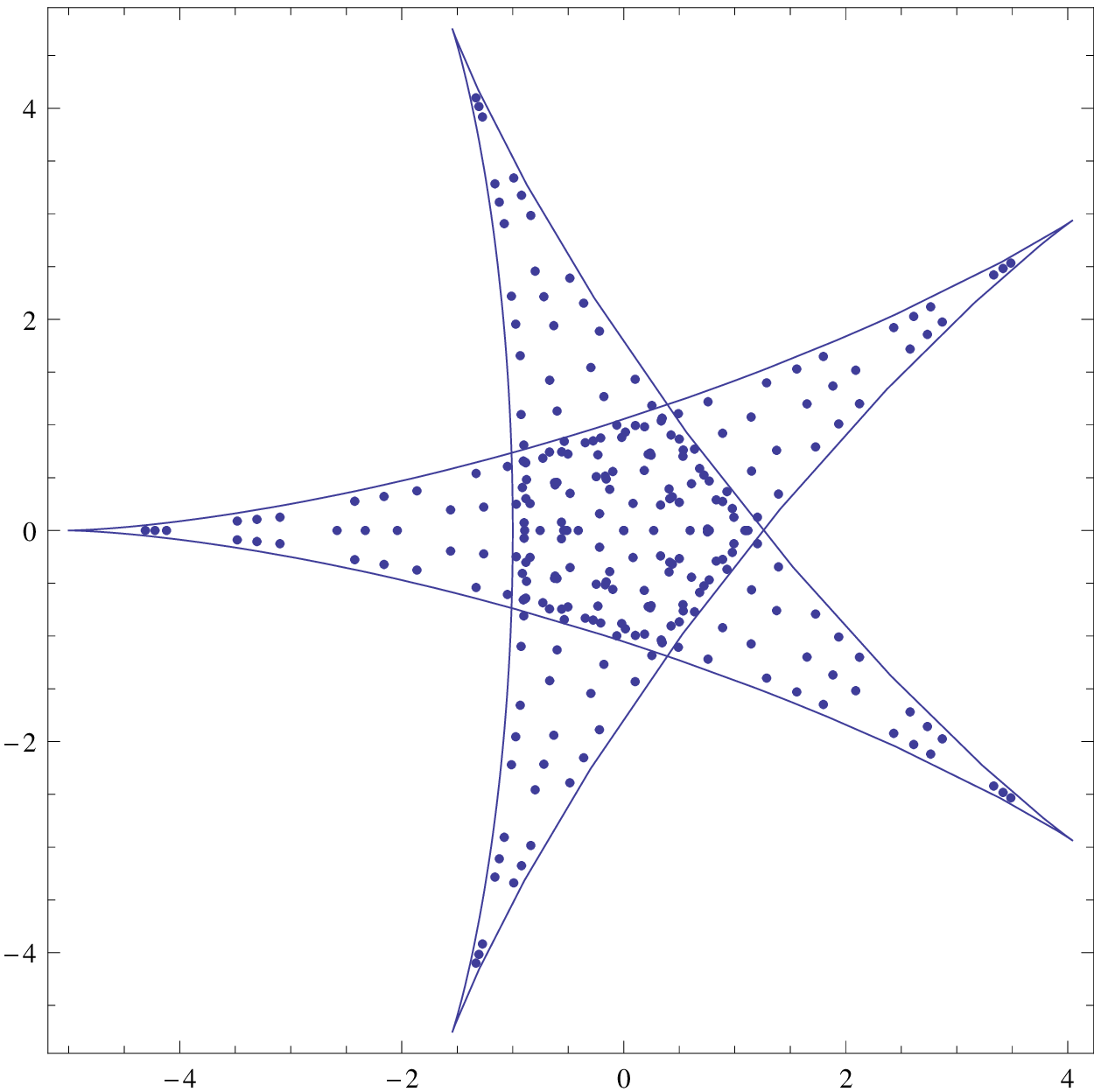}
\includegraphics[scale=0.45]{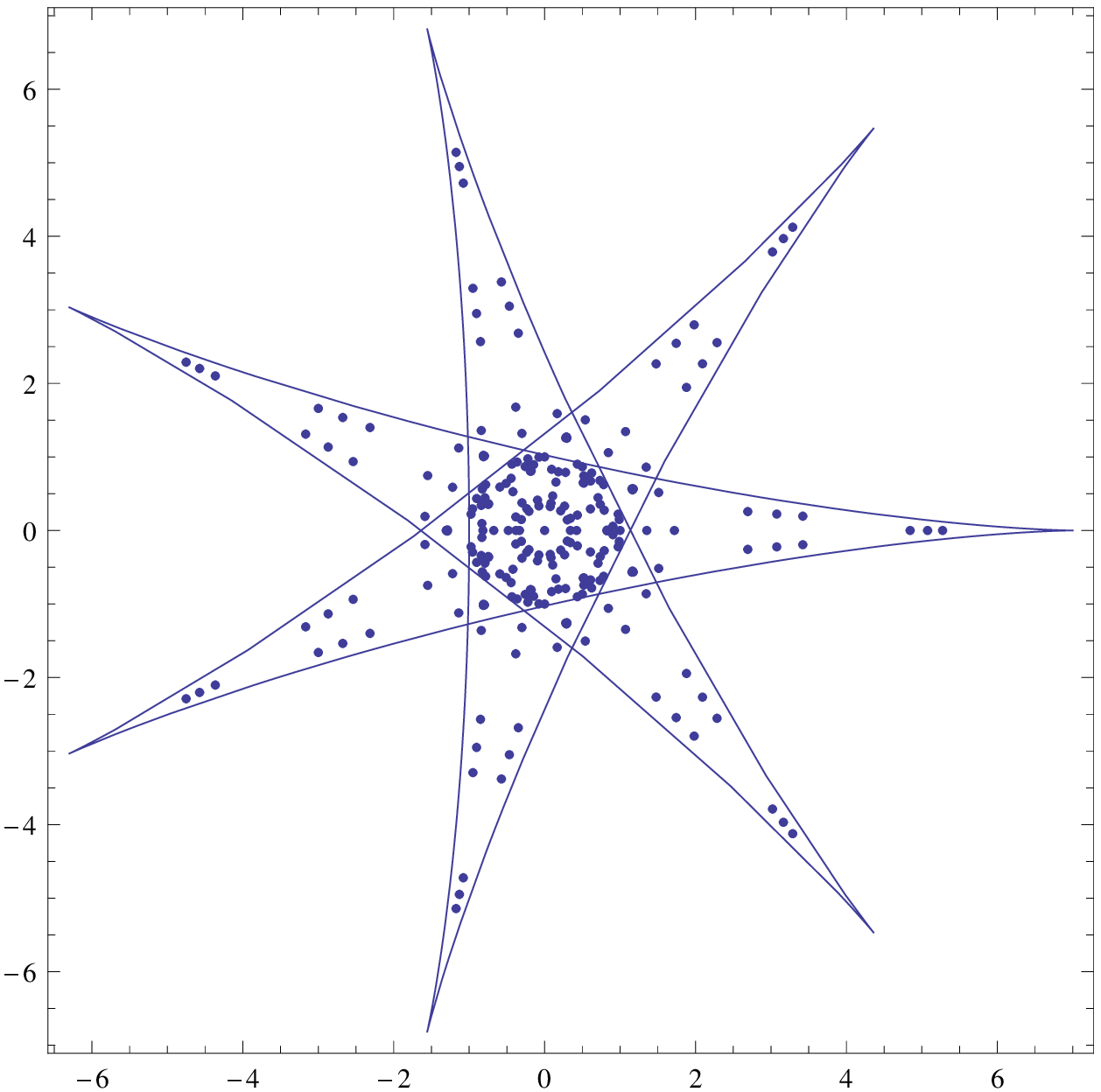}
\end{center}
\vskip 0.3cm
\caption{5-edged star, when $d=2$ and 7-edged star, when $d=3$}
\label{fig2}
\end{figure}

\begin{figure}
\begin{center}
\includegraphics[scale=0.45]{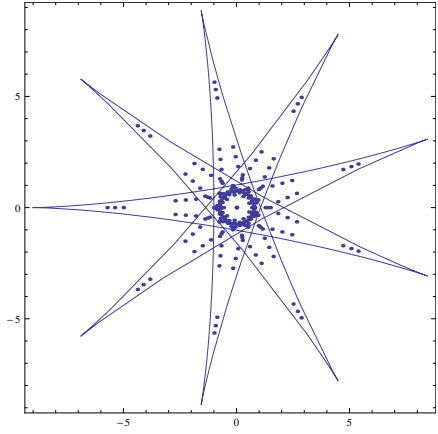}
\end{center}
\vskip 0.3cm
\caption{9-edged star, when $d=4$.}
\label{fig3}
\end{figure}}
\end{example}

\medskip
Finally, the main result of this note is as follows;

\begin{theorem}\label{th:main}
For any banded Toeplitz matrix $\infmatA$, 
where $B_\infmatA$ is defined by \eqref{eqn:Bdef2} and $C_\infmatA$ is defined by 
\eqref{eqn:Cdef2}, 
one has $B_\infmatA\subseteq C_\infmatA$.
\end{theorem}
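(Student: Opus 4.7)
The plan is to argue by contrapositive. Fix $\xvec^{*} \in \C^{n+1}\setminus C_\infmatA$, so that for some index $i_{*} \in \{k,\ldots,k+n\}$ the strict magnitude gap $|\alpha_{i_{*}}(\xvec^{*})| < |\alpha_{i_{*}+1}(\xvec^{*})|$ holds. By continuity of the roots of $Q(t,\xvec)$, the gap persists on some open neighbourhood $U$ of $\xvec^{*}$ with $\rho := \sup_{\xvec \in \overline{U}} |\alpha_{i_{*}}(\xvec)/\alpha_{i_{*}+1}(\xvec)| < 1$. I will exhibit, for each sufficiently large $m$, an index set $I_m \subset \{1,\ldots,m+n\}$ of size $m$ such that $P^{I_m}_\infmatA(\xvec)$ is bounded away from zero on $U$; this forces $\eigenlocus^{(m)}_\infmatA \cap U = \emptyset$ for all large $m$, and hence $\xvec^{*} \notin B_\infmatA$.

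The crucial input is a closed formula for $P^{I}_\infmatA(\xvec)$ in terms of the roots of $Q(t,\xvec)$. Any vector in $\ker\bigl(\infmatA^{(m)} - \sum_{s} x_{s}\infmatI_{s}\bigr)$, extended by zero outside $[1,m+n]$, must satisfy at each row $i = 1,\ldots,m$ the order-$(k+h)$ recurrence whose characteristic polynomial is $t^{-k} Q(t,\xvec)$. At a point where the roots $\alpha_{i}(\xvec)$ are simple, kernel vectors therefore have the form $v_{j} = \sum_{i=1}^{k+h} \mu_{i} \alpha_{i}(\xvec)^{j}$ subject to the $k+h-n$ linear boundary conditions $\sum_{i} \mu_{i} \alpha_{i}^{j} = 0$ for $j \in J_{0} := \{1-k,\ldots,0\} \cup \{m+n+1,\ldots,m+h\}$ that encode the zero extension. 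Plücker duality between the kernel and the row span of the map, combined with the Cauchy--Binet expansion, then yields for $J := \{1,\ldots,m+n\}\setminus I$ the identity
\begin{equation*}
P^{I}_\infmatA(\xvec) \;=\; \pm\, C(\xvec) \cdot \det\bigl(\alpha_{i}(\xvec)^{j}\bigr)_{\substack{i=1,\ldots,k+h \\ j \in J_{0}\cup J}},
\end{equation*}
where the prefactor $C(\xvec)$ depends on $\xvec$ but not on $I$.

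For the asymptotic step, write $i_{*} = k + r$ with $0 \le r \le n$ and choose $I_m$ so that $J = \{1,\ldots,r\} \cup \{m+r+1,\ldots,m+n\}$. Then $J_{0} \cup J = \{1-k,\ldots,r\} \cup \{m+r+1,\ldots,m+h\}$; sorted in increasing order, the column indices exhibit a gap of size exactly $m+1$ between positions $i_{*}$ and $i_{*}+1$. In the Leibniz expansion of the displayed generalised Vandermonde, the rearrangement inequality identifies a dominant block consisting of those permutations that respect the partition of row and column indices at position $i_{*}$; this block sums to a product of two Vandermonde-type sub-determinants in $\{\alpha_{1},\ldots,\alpha_{i_{*}}\}$ and $\{\alpha_{i_{*}+1},\ldots,\alpha_{k+h}\}$ times the explicit monomial $\prod_{i \le i_{*}} \alpha_{i}^{1-k}\prod_{i > i_{*}} \alpha_{i}^{m+r+1}$, while every permutation crossing the partition contributes at most $\rho^{m+1}$ times the magnitude of that block. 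For $m$ large the block-diagonal contribution therefore dominates uniformly on $U$, and the desired lower bound for $P^{I_m}_\infmatA$ on $U$ follows.

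The main obstacle is rigorously establishing the Plücker-type formula above, in particular tracking the prefactor $C(\xvec)$ and handling the loci in $U$ where $Q(t,\xvec)$ has multiple roots: the stated kernel basis must then be replaced by its confluent limit involving derivative-in-$\alpha$ solutions, and the Vandermonde sub-determinants of the dominant block can vanish. Since $B_\infmatA$ is closed and the subset of $\C^{n+1}\setminus C_\infmatA$ on which all $\alpha_{i}$ are pairwise distinct is open and dense there, one can dispose of the degenerate locus either by a continuity-and-density argument exploiting closedness of $B_\infmatA$, or by a direct analysis of the confluent determinant. Specialising to $n = 0$ forces $r = 0$ and $J = \emptyset$, so that $I = \{1,\ldots,m\}$ is the only option and the identity collapses to the classical Widom-type formula that powers the original Schmidt--Spitzer proof, providing a useful sanity check for the plan.
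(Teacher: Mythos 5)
Your proposal is in substance the same as the paper's proof: your choice $I_m = \{r+1,\dotsc,m+r\}$ is exactly the paper's contiguous minor $D^m_r$, your ``Plücker-type'' generalized-Vandermonde formula is a reorganization of the Widom-type closed formula \eqref{eq:widomform} that the paper cites from \cite{Al,BS}, and your dominance-of-the-block-diagonal argument under a size-$(m+1)$ column gap is the same single-dominant-characteristic-root argument appearing in Proposition~\ref{pr:main} (your contrapositive is the paper's argument by contradiction). The two obstacles you flag --- rigorously establishing the closed formula and handling confluent roots --- are precisely the points the paper discharges by citing Theorem~4 of \cite{Al} and by the observation that only simplicity and strict dominance of the largest characteristic root $r_{j1}$ matters, with coincidences among the smaller roots absorbed by general linear-recurrence theory.
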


\medskip\noindent
\textit{Acknowledgements.} The authors are sincerely grateful 
to Professor M.~Tater who actively participated in the consideration of some 
initial examples  related to multivariate Chebyshev polynomials for his help 
and to the Nuclear Physics Institute in  \v Re\v z near 
Prague for the hospitality in March 2011. 
We want to thank Professor A.~Gabrielov of Purdue University 
for his help with the proof of Proposition~\ref{cor:main}. 

\section{Proofs}

\begin{proof}[Proof of Proposition~\ref{prop:basis}] We shall prove items
\rm{(i)} and \rm{(ii)} simultaneously by calculating the leading homogeneous
part of 
$P^I_\infmatA(x_0,...,x_n)$. Let us order the set of all admissible indices
$I=(1\le i_1<\ldots <i_m\le m+n)$ lexicographically. We can also order
lexicographically all monomials of degree $m$ in $x_0,\ldots, x_n$. By
equation~\eqref{eq:main} $P^I_\infmatA(x_0,...,x_n)=\det A_I$ where the columns
of $A_I$ are indexed by $I$. Let $\widetilde P^I_\infmatA(x_0,...,x_n)$ be the
homogeneous part of $P^I_\infmatA(x_0,...,x_n)$ of degree $m$. One can easily
see that the product of all entries on the main diagonal of $A_I$ contains the
 monomial $\mathfrak{m}_I$ of degree $m$ given by  $\mathfrak m_I=\prod_{j=1}^m
x_{i_j-j+1}$. Moreover it is straight-forward that  $\widetilde
P^I_\infmatA(x_0,...,x_n)=\mathfrak{m}_I+\ldots$ where $\ldots$ stands for the
linear combination of monomials $\mathfrak{m}_{I^\prime}$ of degree $m$ coming other
$I^\prime$ which are  lexicographically smaller than $I$. In other words, the
matrix formed by $\widetilde P^I_\infmatA(x_0,...,x_n)$ versus monomials is
triangular in the lexicographic ordering with 
unitary main diagonal which proves items \rm{(i)} and \rm{(ii)}. 

Item \rm{(iii)} is just a reformulation of  Proposition~\ref{prop:eig} above.
\end{proof}

Throughout the rest of the paper, 
we use the convention $\alphavec=(\alpha_1,\dotsc,\alpha_{h+k}).$ 
We will also assume that $c_h=1$, which corresponds to a rescaling of the original matrix $\infmatA.$ 
This is equivalent to the assumption  that $Q(t,\xvec)$ is monic.
By shifting the variables in $\xvec$, we may also assume, without loss of generality, 
that $c_0=c_1\dotso=c_n=0$ in $\infmatA.$

In the above notation, it is convenient to work with the roots of $Q(t,\xvec).$
This motivates the following definitions.
Let $\Gamma_j\subset \C^{h+k},\; j=k,\dots, k+n$ denote the 
real semi-algebraic hypersurface
consisting of all $\alphavec=(\alpha_1,\dotsc, \alpha_{h+k})$ such that
when the $\alpha_j$ are ordered with increasing modulus, $|\alpha_j|=|\alpha_{j+1}|$.
Similarly, let $G_j$ be defined as the real semi-algebraic set 
\[
\{ \xvec \in \C^{n+1} \colon Q(t,\xvec) = (t-\alpha_1)\dotso (t-\alpha_{h+k}) \text{ where } \alphavec \in \Gamma_j \}.
\]
Then, by definition, $C_\infmatA = \bigcap_{j=k}^{k+n} G_j.$

\begin{proposition}\label{prop:compact} 
For any banded Toeplitz matrix $\infmatA$ and any 
non-negative $n<h$, the set $C_\infmatA$ defined by \eqref{eq:symbol}-\eqref{eqn:Cdef2} 
is compact.
\end{proposition}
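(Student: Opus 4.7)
Closedness of $C_\infmatA$ is straightforward: the ordered moduli $|\alpha_1(\xvec)|\le\cdots\le|\alpha_{h+k}(\xvec)|$ depend continuously on $\xvec$, so $C_\infmatA$, being the preimage of the closed diagonal in $\R^{n+2}$ under the continuous map $\xvec\mapsto(|\alpha_k(\xvec)|,\ldots,|\alpha_{k+n+1}(\xvec)|)$, is closed. The real content of the proposition is boundedness, which I propose to prove by contradiction.

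Suppose there is a sequence $\{\xvec_m\}\subset C_\infmatA$ with $R_m:=\|\xvec_m\|_\infty\to\infty$. Write $a_i(\xvec)$ for the coefficient of $t^i$ in $Q(t,\xvec)$. Under the standing normalizations $c_h=1$ and $c_0=\cdots=c_n=0$, we have $a_i=c_{i-k}$ (a fixed constant) for $i\in\{0,\ldots,k-1\}\cup\{k+n+1,\ldots,h+k\}$, while $a_{k+j}=-x_j$ for $j=0,\ldots,n$. Let $\Pi_m$ denote the upper convex hull of the finite point set $\{(i,\log|a_i(\xvec_m)|) : a_i(\xvec_m)\neq 0\}\subset\R^2$. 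Outside the middle block $\{k,\ldots,k+n\}$ the heights $\log|a_i|$ are bounded in absolute value by a constant $M$ independent of $m$; inside the block they are at most $\log R_m$, with at least one of them equal to $\log R_m$ by the definition of $R_m$.

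For $m$ large, the maximal middle point therefore lies far above the chord joining the endpoints of $\Pi_m$ on either side of the middle block, so $\Pi_m$ must have at least one vertex at some position $k+j^*_m$ with $j^*_m\in\{0,\ldots,n\}$, and the slopes of $\Pi_m$ immediately before and after this vertex differ by an amount of order $\log R_m$. A standard quantitative Newton polygon argument, carried out via Rouch\'e's theorem on circles whose radii lie in the gaps between consecutive slopes of $\Pi_m$, then shows that the roots of $Q(t,\xvec_m)$ cluster accordingly: the first $k+j^*_m$ roots $\alpha_1(\xvec_m),\ldots,\alpha_{k+j^*_m}(\xvec_m)$ all have modulus at most $\rho^-_m\to 0$, while the remaining $h-j^*_m$ roots $\alpha_{k+j^*_m+1}(\xvec_m),\ldots,\alpha_{h+k}(\xvec_m)$ all have modulus at least $\rho^+_m\to\infty$. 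Since $j^*_m\in[0,n]$ implies $k\le k+j^*_m$ and $k+j^*_m+1\le k+n+1$, the indices $k+j^*_m$ and $k+j^*_m+1$ both lie in the block $\{k,\ldots,k+n+1\}$ on which the moduli are supposed to be equal for $\xvec_m\in C_\infmatA$ — a contradiction.

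The main technical obstacle is the quantitative Newton polygon / Rouch\'e step: when several $|x_j^{(m)}|$ are of the same maximal order $R_m$, or when some of the end coefficients $c_i$ vanish, the hull $\Pi_m$ can acquire several interior vertices and the cluster structure becomes finer. One must verify in all such cases that some interior vertex of $\Pi_m$ has horizontal coordinate strictly inside $[k,k+n]$ and that the corresponding cluster boundary genuinely separates the roots of $Q(t,\xvec_m)$ in the claimed way. Qualitatively this is forced by the divergence of the middle heights together with the boundedness of the end heights, but making the Rouch\'e bookkeeping uniform over all possible limit profiles $\yvec_m:=\xvec_m/R_m\to\yvec$ with $\|\yvec\|_\infty=1$ is the main work of the argument.
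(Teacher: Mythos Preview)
Your outline is sound, but the paper takes a more elementary and direct route that sidesteps the Newton polygon / Rouch\'e machinery entirely. Rather than arguing ``if $\|\xvec_m\|\to\infty$ then the roots must separate somewhere in the middle block'', the paper uses the constraint $|\alpha_k|=\cdots=|\alpha_{k+n+1}|$ first: along any unbounded sequence in $C_\infmatA$, the boundary between bounded and unbounded roots must fall at some index $b$ with either $b<k$ or $b\ge k+n+1$. In either case one of the elementary symmetric functions $e_{h+k-b}(\alphavec)$ or $e_b(\alphavec)$ has a single dominant term forcing it to diverge, while by Vieta it equals a fixed coefficient $c_{b-k}$ or $c_{h-b}$---contradiction. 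No convex hulls, no circle counts.

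Two remarks on your version. First, the assertion that the first $k+j^*_m$ roots have modulus $\le\rho^-_m\to 0$ and the remaining ones have modulus $\ge\rho^+_m\to\infty$ is stronger than what the Newton polygon actually delivers when the hull has several interior vertices: at a single vertex you only get $|\alpha_{k+j^*_m+1}|/|\alpha_{k+j^*_m}|\to\infty$, not that each factor tends to $0$ or $\infty$ separately (your own $3$-root example with $|x_0|=|x_1|=R$ would have a bounded middle root). The ratio statement is all you need for the contradiction, so this is easy to repair, but it should be stated correctly. Second, the ``uniform Rouch\'e bookkeeping over all limit profiles $\yvec$'' that you flag as the main work simply does not arise in the paper's argument: once the jump index $b$ is located outside $[k,k+n]$, a one-line dominant-term estimate on $e_{h+k-b}$ or $e_b$ finishes the proof. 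So while your approach is correct and conceptually natural, the symmetric-function argument is both shorter and avoids the case analysis you anticipate.
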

\begin{proof}
As discussed above, 
we may without loss of generality assume that $c_h=1$ and 
$c_0 = c_1 = \dotso = c_n=0.$ 
Since $Q$ may be assumed to be monic, 
we have $c_j = e_{h-j}(-\alphavec)$ for $-k\leq j<0$ and $n<j\leq h,$
and $x_j = -e_{h-j}(-\alphavec)$ when $0\leq j \leq n.$
Thus, it suffices to show that the set of $\alphavec \in \C^{h+k}$
that satisfies the conditions \eqref{eq:symbol}-\eqref{eqn:Cdef2},  
is compact.
It is also evident that the set $C_\infmatA$ is closed, 
so we only need to show that it is bounded. We show this fact  by contradiction.
\medskip

Assume we have a sequence of roots $\{\alphavec^m\}_{m=1}^\infty$ of \eqref{eq:symbol} 
     such that $\|\alphavec^m\| \rightarrow \infty$ 
where \eqref{eqn:Cdef2} 
is satisfied for each $\alphavec^m.$
We assume that the modulus of the roots are always ordered increasingly.
There are two cases to consider.

\medskip\noindent
\textbf{Case 1:} Assume that for some $0\leq b < k$, a sequence of individual 
roots satisfies the condition $|\alpha_{b+1}^m|\rightarrow \infty$ but 
$|\alpha_j^m|$ are bounded for all $m$ and $j\leq b.$
  Then consider $e_{h+k-b}(\alphavec).$
Since $b < k,$ in our notation $e_{h+k-b}(\alphavec)$ equals the 
coefficient $c_{b-k}.$ 
Notice that $e_{h+k-b}$ contains the term
$\alpha_{b+1} \alpha_{b+2} \dotsm \alpha_{h+k}$ 
which  grows quicker than all
other terms in the expansion of $e_{h+k-b}(\alphavec).$
This contradicts the assumption  $e_{h+k-b}(\alphavec)=c_{b-k}.$

\medskip\noindent 
\textbf{Case 2:} 
Assume that for some $b$ with $k+n\leq b < h+k,$ we have a sequence of 
individual roots $|\alpha_{b+1}^m|\rightarrow \infty$ but $|\alpha_j^m|$ are 
bounded for all $m$ and $j\leq b.$ Consider 
\[
e_{b}(\alphavec)=e_{b}(\alpha_1,\dotsc,\alpha_{h+k}) = \sum_{\sigma \in \binom{[h+k]}{b}}
\frac{e_0}{\alpha_{\sigma_1}\alpha_{\sigma_2} \dotsm \alpha_{\sigma_{b}}}.
\]
This contains an expression with the denominator $\alpha_1 \alpha_2 \dotsm
\alpha_{b}$, i.e. the product of all bounded roots.
Now, since $h+k-b$ roots among all $h+k$ roots grow in absolute value, 
and the product $\alpha_1 \dotsc \alpha_{h+k}$ equals $c_h,$
it follows that $|\alpha_1 \alpha_2 \dotsm \alpha_{b}| \rightarrow 0$ as 
$m\rightarrow \infty,$
and this term converges to 0 quicker than any other product 
$\alpha_{\sigma_1}\alpha_{\sigma_2} \dotsm \alpha_{\sigma_{b}}.$
Thus, $|e_{b}|$ should grow. 
This contradicts the assumption  $e_b(\alphavec) = c_{h-b}.$

Notice that under our assumptions, the above cases  cover all possible 
ways for a sequence of roots to diverge.
Since both cases yield a contradiction, it follows that any sequence of roots of \eqref{eq:symbol}  
satisfying \eqref{eqn:Cdef2} 
must be bounded. Thus, $C_\infmatA$ is compact.
\end{proof}

The following result is multivariate analog of a known fact in the  case $n=0$, see \cite[Prop. 11.18 and 11.19]{BS}.

\begin{proposition}\label{cor:main}
In the  notation of \eqref{eq:symbol}--\eqref{eqn:Cdef2}, 
for any $\xvec$ belonging to  the boundary  $\partial C_\infmatA$ of $C_{\infmatA}$, 
at least one of the following three conditions is  satisfied: 
\begin{enumerate}
\item[(i)] the discriminant of $Q(t,\xvec)$ with respect to $t$ vanishes, i.e. $Q(t,\xvec)$ has (at least) a double root in $t$. 
\item[(ii)]  $|\alpha_{k-1}(\xvec)|=|\alpha_{k}(\xvec)|=|\alpha_{k+1}(\xvec)|=\dots=|\alpha_{k+n+1}(\xvec)|$.
\item[(iii)] $|\alpha_{k}(\xvec)|=|\alpha_{k+1}(\xvec)|=\dots=|\alpha_{k+n+1}(\xvec)|=|\alpha_{k+n+2}(\xvec)|$. 
\end{enumerate} 
\end{proposition}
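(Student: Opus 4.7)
The plan is to establish the contrapositive: if $\xvec_0 \in C_\infmatA$ and none of (i), (ii), (iii) holds at $\xvec_0$, then $\xvec_0$ lies in the relative interior of $C_\infmatA$, meaning $C_\infmatA$ is locally a smooth real submanifold of $\C^{n+1}$ of dimension $n+1$ at $\xvec_0$. Since (i) fails, the discriminant of $Q(t,\xvec_0)$ is nonzero, so by the implicit function theorem there is a neighborhood $U$ of $\xvec_0$ on which all roots $\tilde\alpha_1(\xvec),\ldots,\tilde\alpha_{h+k}(\xvec)$ can be chosen as holomorphic functions, labeled according to the modulus-ordering at $\xvec_0$. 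Because (ii) and (iii) fail, the strict inequalities $|\tilde\alpha_{k-1}(\xvec_0)| < r < |\tilde\alpha_{k+n+2}(\xvec_0)|$, where $r$ is the common modulus of $\tilde\alpha_\ell(\xvec_0)$ for $\ell \in A := \{k,\ldots,k+n+1\}$, persist on $U$ after shrinking; hence $C_\infmatA \cap U = F^{-1}(0)$ for the smooth map $F\colon U \to \R^{n+1}$ with components $F_j(\xvec) = \log|\tilde\alpha_{k+j}(\xvec)|^2 - \log|\tilde\alpha_k(\xvec)|^2$. It thus suffices to prove that $F$ is a submersion at $\xvec_0$.

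Implicit differentiation of $Q(\tilde\alpha_\ell,\xvec)=0$ gives the holomorphic $1$-forms
$$d\log\tilde\alpha_\ell \;=\; \frac{\alpha_\ell^{k-1}}{Q'(\alpha_\ell)} \sum_{i=0}^n \alpha_\ell^i\, dx_i, \qquad \ell\in A.$$
Since a holomorphic $1$-form on a complex manifold vanishes exactly when its real part does, the submersion of $F$ is equivalent to the nonexistence of a nonzero real tuple $(\lambda_\ell)_{\ell\in A}$ satisfying $\sum_\ell \lambda_\ell = 0$ and $\sum_{\ell\in A} \lambda_\ell\, d\log\tilde\alpha_\ell = 0$ as a complex $1$-form. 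Factoring $Q=RS$ with $S(t)=\prod_{\ell\in A}(t-\alpha_\ell)$ and $R(t)=\prod_{j\notin A}(t-\alpha_j)$, so that $Q'(\alpha_\ell) = R(\alpha_\ell)S'(\alpha_\ell)$, the classical Lagrange/residue identity $\sum_{\ell\in A} \alpha_\ell^j/S'(\alpha_\ell) = 0$ for $j=0,1,\ldots,n$ shows that the unique (up to scale) $\C$-linear relation among the $n+2$ forms $d\log\tilde\alpha_\ell$ must have coefficients of the form $\lambda_\ell = c\cdot R(\alpha_\ell)/\alpha_\ell^{k-1}$ for a common $c\in\C$.

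The remaining task is to show that no nonzero $c\in\C$ can make all $c\,R(\alpha_\ell)/\alpha_\ell^{k-1}$ simultaneously real and sum to zero. First, $R(\alpha_\ell)\ne 0$ for $\ell\in A$, because $R(\alpha_\ell)=0$ would mean $\alpha_\ell$ coincides with a root indexed by $B$, violating the failure of (i). Hence the reality condition forces all $R(\alpha_\ell)/\alpha_\ell^{k-1}$ ($\ell\in A$) to share a common argument modulo $\pi$, while the summation-to-zero condition becomes the algebraic constraint $\sum_{\ell\in A} R(\alpha_\ell)/\alpha_\ell^{k-1}=0$.

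The main obstacle is precisely this last step: ruling out the simultaneous occurrence of these two conditions whenever (i), (ii), (iii) all fail. Geometrically the picture is strongly constrained, since the $\alpha_\ell$ with $\ell\in A$ lie on the circle $|t|=r$ while the roots indexed by $B$ lie strictly off it, so the required alignment of arguments of $R(\alpha_\ell)/\alpha_\ell^{k-1}$ together with the vanishing sum is highly non-generic; but turning this picture into a rigorous impossibility argument is subtle. I expect the cleanest route to be a semi-algebraic stratification argument in the spirit of Gabrielov (to whom the authors attribute assistance with this proposition), possibly combined with a reduction to the univariate Schmidt--Spitzer case \cite[Prop.~11.18--11.19]{BS} obtained by slicing $C_\infmatA$ with generic affine lines in $\C^{n+1}$ and invoking the known univariate characterization of boundary points.
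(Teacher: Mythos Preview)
Your approach is genuinely different from the paper's, and the gap you flag in your last paragraph is real. You attempt to prove that $C_\infmatA$ is a smooth $(n{+}1)$-manifold near any $\xvec_0$ at which (i)--(iii) all fail, by checking that your map $F$ is a submersion. Your reduction is correct: failure of the submersion amounts to the unique $\C$-linear relation among the $d\log\tilde\alpha_\ell$ (with coefficients proportional to $R(\alpha_\ell)/\alpha_\ell^{k-1}$) being scalable to a real tuple summing to zero. But you do not rule this out, and there is no reason to expect this to be straightforward. Worse, even if the submersion fails at some $\xvec_0$, that only says $C_\infmatA$ is singular there, which is a priori compatible with $\xvec_0$ \emph{not} being a boundary point in the chain-theoretic sense (think of a node on a real curve). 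So your strategy may be aiming at a statement strictly stronger than the proposition, and the ``cleanest route'' you speculate about at the end would in any case lead you back to a stratification argument.

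The paper bypasses all of this by a two-step factorization. First, it works in the full space $Pol_{h+k}$ of monic polynomials of degree $h+k$ and considers the set $\Sigma_{p,q}=\{|\alpha_p|=\dots=|\alpha_q|\}$. Because the Vieta map from root-space to coefficient-space is a local diffeomorphism off the discriminant, one can read off $\partial\Sigma_{k,k+n+1}$ directly in root coordinates: it is contained in $\Sigma_{k-1,k+n+1}\cup\Sigma_{k,k+n+2}\cup\{\text{discriminant}\}$, exactly the three loci (ii), (iii), (i). Second, a general lemma shows that intersecting a closed semi-algebraic set $\Gamma$ with a closed \emph{algebraic} variety $\Theta$ creates no new boundary, i.e.\ $\partial(\Gamma\cap\Theta)\subseteq\partial\Gamma\cap\Theta$; the point is that a real algebraic variety is a stratified chain without boundary, since every branch near a codimension-one stratum can be oriented via its Puiseux parametrization so that the local boundary cancels. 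Since $C_\infmatA$ is precisely $\Sigma_{k,k+n+1}$ intersected with the affine subspace of $Pol_{h+k}$ cut out by fixing the coefficients $c_j$, the proposition follows at once. This route never needs to compute the rank of $dF$, and it handles possible singular points of $C_\infmatA$ uniformly.
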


\begin{proof}
We need the following two simple statements.
\begin{lemma}\label{lm:triv} Let $Pol_d$ be the set of all monic polynomials of degree $d$ with complex coefficients. 
Let $\Sigma_{p,q}\subset Pol_d$ be the subset of polynomials 
satisfying 
\begin{equation}\label{eq:pq}
|\al_p|=|\al_{p+1}|=\cdots =|\al_{q}|, 
\end{equation}
where $1\le p<q\le d$ and $\al_1,\al_2,\dots,\al_d$ being the roots of 
polynomials ordered according to their increasing absolute values. 
Then $\Sigma_{p,q}$ is a real semi-algebraic set of codimension $q-p$ 
whose boundary is the union of three pieces: $\Sigma_{p-1,q}$, $\Sigma_{p, q+1}$ 
and the intersection of $\Sigma_{p,q}$ with the standard discriminant in $Pol_d$, 
i.e. the set of polynomials having multiple roots. 
(Notice that if $p=1$ then $\Sigma_{p-1,q}$ is empty, and if $q=d$ then $\Sigma_{p,q+1}$ is empty by definition.)  
\end{lemma}
\begin{proof}
$\Sigma_{p,q}$ is obtained as the image under the Vieta map of an obvious 
semi-algebraic set $|\al_1|\le|\al_2|\le\dots\le|\al_p|=|\al_{p+1}|=\dots =|\al_q|\le|\al_{q+1}|\le \dots \le |\al_d|$. 
Notice that the Vieta map is a local diffeomorphism outside the preimage of the standard discriminant.  
Therefore the boundary of $\Sigma_{p,q}$ must either belong to the standard discriminant 
or to one of $\Sigma_{p-1,q}$ or $\Sigma_{p,q+1}$. 
The former is the common boundary between $\Sigma_{p,q}$ and $\Sigma_{p-1,q-1}$ and the
latter is the common boundary between $\Sigma_{p,q}$ and $\Sigma_{p+1,q+1}$. 
\end{proof}

Given a closed Whitney stratified set $X$ 
(for example, semi-analytic)  we say that $X$ is \emph{a $k$-dimensional stratified set without boundary} if 
\begin{enumerate}
 \item[(i)] the top-dimensional strata of $X$ have dimension $k$;  
 \item[(ii)] for any point $x$ lying in any stratum  of dimension $k-1$, 
one can choose orientation of the (germs of) $k$-dimensional 
strata of a sufficiently small neighborhood of $x$ in $X$
so that $\partial X=0$.
\end{enumerate}
 
\begin{lemma}\label{lm:inter}
The boundary of the intersection of any closed 
semi-algebraic set $\Gamma$ with any closed algebraic 
set $\Theta$ is included in  the intersection of the boundary $\partial \Gamma$ with $\Theta$. 
\end{lemma}

\begin{proof} 
Observe that any real algebraic variety $X$ of dimension $k$ is a stratifiable set without boundary. 
Indeed, the fact we are proving is local,
and it suffices to prove it for generic $x$ on $(k-1)$-dimensional strata.

Consider an embedding of $X$ in 
some high-dimensional linear space, 
take the Whitney stratification with $x$ on its stratum $Y\subset B$
of dimension $k-1,$ and a transversal to $Y$ of codimension $k-1$ at $x.$ 

Therefore, we may now assume that the germ of $X$ near $x$ 
is topologically a product of a germ of algebraic curve
and a germ of a smooth manifold of dimension $k-1$. 
Furthermore,  a germ of any real algebraic curve $\Gamma$ can be always
oriented so that $\partial\Gamma=0$ which follows from the existence of 
Puiseux series for  an arbitrary branch of algebraic curve.  
This argument shows that any point in the intersection $\Gamma\cap \Theta$ 
which does not belong to the boundary of $\Gamma$ can not lie on the boundary of 
this intersection which settles Lemma~\ref{lm:inter}. 
\end{proof}

Lemmas~\ref{lm:triv} and \ref{lm:inter} immediately imply Proposition~\ref{cor:main} since every $C_{\infmatA}$ is the intersection of an appropriate $\Sigma_{p,q}$ with an appropriate affine subspace in $Pol_{k+h}$. 
\end{proof}

\begin{proof}[Proof of Theorem~\ref{th:main}]
In our notation, let $D^m_j(\xvec)$ be the determinant of the $m \times m$-matrix $A_I$
with $I=\{j+1,j+2,\dotsc,j+m\}$ for $0\leq j \leq n.$ It is evident that
$\eigenlocus_\infmatA^{(m)}$ is a subset of the set
$\widetilde{\eigenlocus}_\infmatA^{(m)}$ of solutions to the system of
polynomial equations 
\begin{align}\label{eq:determinantsystem}
D^m_0(\xvec)=D^m_1(\xvec)=\dotso=D^m_n(\xvec)=0. 
\end{align}
We will show a stronger statement that,  in notation of Theorem~\ref{th:main}, 
\[
\lim_{m\to\infty} \widetilde{\eigenlocus}_\infmatA^{(m)}\subseteq C_\infmatA.
\]  
Although each individual $ \widetilde{\eigenlocus}_\infmatA^{(m)}$ (considered
as a points set with multiplicities) is strictly bigger than
${\eigenlocus}_\infmatA^{(m)}$  the limits $B_\infmatA=\lim_{m\to\infty}
{\eigenlocus}_\infmatA^{(m)}$ and $\lim_{m\to\infty}
\widetilde{\eigenlocus}_\infmatA^{(m)}$ seem to coincide as infinite sets.

The next proposition accomplishes the proof of Theorem \ref{th:main}.
\end{proof}

In Theorem 4 of \cite{Al} it was shown that
each sequence of determinants $\{D^m_j(\xvec)\}_{m=1}^\infty$ as above satisfies a linear
recurrence relation with coefficients depending on $\xvec.$ 
The characteristic polynomial $\chi_j(t)$
of the $j$-th recurrence can be factorized as
\begin{align}\label{eq:chareq}
\chi_j(t,\xvec)=\prod_\sigma (t- r_{j\sigma} ), \text{ where } r_{j\sigma} &= (-1)^{k+j}(\alpha_{\sigma_1}
\cdots \alpha_{\sigma_{k+j}})^{-1}, \sigma \in \binom{[k+h]}{k+j}.
\end{align}

\begin{proposition}\label{pr:main}
Suppose that $\{\xvec_m\}_1^\infty$, is a sequence of
points in $\C^{n+1}$ satisfying the system of equations:
\begin{align}\label{eq:system}
D^m_j(\xvec_m) &=0 \text{ for } j = 0,1,\dotsc,n \text{ and } m=1,2,\dotsc
\end{align}
and such  that the limit $\lim_{m \rightarrow \infty} \xvec_m =:\xvec^*$ exists.
Then for all  $j=0,\dotsc,n$
$ |\alpha_{k+j}(\xvec^*)|=|\alpha_{k+j+1}(\xvec^*)|$
when the $\alpha_i$ are indexed with increasing order of their modulus.
\end{proposition}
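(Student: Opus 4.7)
The plan is to argue by contradiction: suppose that for some $j \in \{0,1,\dotsc,n\}$ the limit $\xvec^{\ast}$ fails the desired equality, i.e.\ $|\alpha_{k+j}(\xvec^{\ast})| < |\alpha_{k+j+1}(\xvec^{\ast})|$ under the prescribed ordering. The goal is then to turn the hypothesis $D^m_j(\xvec_m) = 0$ into a quantitative lower bound on $|D^m_j(\xvec)|$ in a neighborhood of $\xvec^{\ast}$ that contradicts this.

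The first step is to exploit the linear recurrence of \cite[Thm.~4]{Al} with characteristic polynomial $\chi_j(t,\xvec)$ from \eqref{eq:chareq}. Assuming for simplicity that the roots $r_{j\sigma}(\xvec)$ are pairwise distinct (the coincident case is handled by perturbation, since the contradictory hypothesis is an open condition), one writes
\[
D^m_j(\xvec) \;=\; \sum_{\sigma \in \binom{[k+h]}{k+j}} a_{j\sigma}(\xvec)\, r_{j\sigma}(\xvec)^{m},
\]
where the $a_{j\sigma}(\xvec)$ are determined by the initial data of the recurrence, rational in the $\alpha_i(\xvec)$, and independent of $m$. Because $|r_{j\sigma}(\xvec)| = |\alpha_{\sigma_1}(\xvec)\dotsm\alpha_{\sigma_{k+j}}(\xvec)|^{-1}$, the contradictory hypothesis singles out $\sigma^{\ast} = \{1,2,\dotsc,k+j\}$ as the unique modulus maximizer at $\xvec^{\ast}$; by continuity of the roots there exist a neighborhood $U \ni \xvec^{\ast}$ and constants $R > \rho > 0$ with $|r_{j\sigma^{\ast}}(\xvec)|\ge R$ and $|r_{j\sigma}(\xvec)|\le \rho$ for all $\sigma\neq\sigma^{\ast}$ and $\xvec\in U$. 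Granted that $a_{j\sigma^{\ast}}(\xvec^{\ast})\neq 0$, the exponential separation and continuity of $a_{j\sigma^{\ast}}$ yield, for $m$ large and $\xvec$ close enough to $\xvec^{\ast}$,
\[
|D^m_j(\xvec)| \;\ge\; \tfrac{1}{2}\,|a_{j\sigma^{\ast}}(\xvec^{\ast})|\,R^{m} \,-\, C\,\rho^{m} \;>\; 0,
\]
directly contradicting $D^m_j(\xvec_m)=0$ in the tail. The nonvanishing of $a_{j\sigma^{\ast}}(\xvec^{\ast})$ would be shown in the spirit of the classical univariate Schmidt--Spitzer argument: this coefficient, computed by Cramer's rule applied to the Vandermonde-type system expressing the initial values $D^0_j,D^1_j,\dotsc$ in terms of the $r_{j\sigma}$, is a nonzero rational expression in the pairwise differences of the $r_{j\sigma}(\xvec)$ and vanishes only on a proper algebraic subvariety.

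The main obstacle I foresee is precisely this nonvanishing step: if $\xvec^{\ast}$ happens to lie on the zero locus of $a_{j\sigma^{\ast}}$, the direct asymptotic comparison collapses. The remedy is a finite iteration: pass to the next-largest surviving exponential $r_{j\tau}$ with $a_{j\tau}(\xvec^{\ast})\neq 0$, verify that its modulus is strictly larger than those of the remaining surviving $r_{j\sigma}$ (exact modulus ties can be grouped into oscillatory packets whose amplitude is controlled along an appropriate subsequence of $m$ by equidistribution), and repeat. The process terminates in at most $\binom{k+h}{k+j}$ steps because not all $a_{j\sigma}(\xvec^{\ast})$ can vanish simultaneously, as this would force $D^m_j \equiv 0$. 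Once a genuinely dominant surviving exponential is isolated at $\xvec^{\ast}$, the modulus gap forces $|D^m_j(\xvec_m)|$ to stay bounded away from zero as $m \to \infty$, yielding the desired contradiction and completing the proof of Proposition~\ref{pr:main}.
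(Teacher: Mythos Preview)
Your overall strategy---contradiction via the dominant exponential in the linear recurrence for $D^m_j$---matches the paper's. The substantive difference is how the nonvanishing of the leading coefficient $a_{j\sigma^{\ast}}(\xvec^{\ast})$ is secured. The paper does not treat these coefficients abstractly but invokes Widom's determinant formula \eqref{eq:widomform} (from \cite{Al,BS}), which gives
\[
a_{j\sigma}(\xvec)\;=\;\prod_{l\in\sigma,\ i\notin\sigma}\Bigl(1-\frac{\alpha_l(\xvec)}{\alpha_i(\xvec)}\Bigr)^{-1}.
\]
Under the contradictory hypothesis $|\alpha_{k+j}(\xvec^{\ast})|<|\alpha_{k+j+1}(\xvec^{\ast})|$ and with $\sigma^{\ast}=\{1,\dotsc,k+j\}$, every factor above has $|\alpha_l|<|\alpha_i|$, so $a_{j\sigma^{\ast}}(\xvec^{\ast})$ is manifestly finite and nonzero. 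This dissolves exactly the obstacle you flag, and no iteration is needed.

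Your proposed workaround---discard those $\sigma$ with $a_{j\sigma}(\xvec^{\ast})=0$ and pass to the next surviving maximal exponential---has a genuine gap. The estimate must be carried out at the approximants $\xvec_m$, not at $\xvec^{\ast}$. If $a_{j\sigma^{\ast}}(\xvec^{\ast})=0$ but $a_{j\sigma^{\ast}}(\xvec_m)\neq 0$, then $a_{j\sigma^{\ast}}(\xvec_m)\,r_{j\sigma^{\ast}}(\xvec_m)^{m}$ is the product of a quantity tending to $0$ and one growing like $R^{m}$, with $R$ strictly larger than the modulus of every other $r_{j\sigma}$. Without any control on the rate at which $\xvec_m\to\xvec^{\ast}$, you cannot conclude that this term is dominated by the ``surviving'' $\tau$-term of size $\asymp\rho^{m}$; the discarded term may well remain the largest one at $\xvec_m$. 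So the iteration does not close. The correct fix is not to iterate but to compute $a_{j\sigma^{\ast}}$ explicitly via Widom's formula, which shows it is nonzero under the very hypothesis you are trying to contradict.
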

\begin{proof}
Provided that all the roots of $\chi_j(t,\xvec)$ are distinct, 
by using a version of Widom's formula, (see \cite{Al,BS}) we have 
\begin{equation}\label{eq:widomform}
D^m_j(\xvec) = \sum_\sigma \prod_{l \in \sigma, i\notin \sigma} \left( 1 - \frac{\alpha_l(\xvec)}{\alpha_i(\xvec)} \right)^{-1}
\cdot r_{j\sigma}(\xvec)^m.
\end{equation}
We may assume that for $\xvec^*$ and fixed $j$, the $r_{j\sigma}(\xvec^*)$ are
ordered decreasingly with respect to their modulus (for some ordering $\sigma=1,2,\dotsc$).
The goal is to prove that $|r_{j1}(\xvec^*)|=|r_{j2}(\xvec^*)|$ 
since this implies $|\alpha_{k+j}(\xvec^*)|=|\alpha_{k+j+1}(\xvec^*)|.$
We show this  fact by contradiction. 
\medskip

Assume that $|r_{j1}(\xvec^*)|>|r_{j2}(\xvec^*)|\geq \dotso \geq |r_{jb}(\xvec^*)|,$ 
i.e. that the largest root is simple and has modulus 
strictly larger than any other root of the characteristic equation \eqref{eq:chareq}.
By examining \eqref{eq:widomform}, 
it is evident that $r_{j1}(\xvec_m)^m$ is the dominating term for sufficiently large $m,$
that is, $D^m_j(\xvec_m)/r_{j1}(\xvec_m)^m \rightarrow L \neq 0$ as $m \rightarrow \infty.$

By standard properties of linear recurrences, this holds
even when there are multiple zeros among the smaller roots;
remember that our assumption was that $r_{j1}(x_m)$ is a simple zero of \eqref{eq:chareq}
when $m$ is large enough.

Hence, for sufficiently large $m$, $D^m_j(\xvec_m) \approx L r_{j1}(x_m)^m,$ 
which is non-zero for sufficiently large $m$. 
This contradicts the condition that $\xvec_m$ satisfies \eqref{eq:system}.
Consequently, $|r_{j1}(\xvec^*)|= |r_{j2}(\xvec^*)|$ for $j=0,1,\dotsc,n$
and this implies Proposition \ref{pr:main}.
\end{proof}

Proposition~\ref{pr:main} implies that $\xvec$ lies in $B_\infmatA$ only if $\xvec$ is a
limit of solutions to \eqref{eq:system},
but such limit $\xvec$ must satisfy that $|\alpha_{k}(\xvec)|=|\alpha_{k+1}(\xvec)|=\dotso =
|\alpha_{k+n+1}(\xvec)|.$ Therefore, $B_\infmatA \subseteq C_\infmatA.$

\section{Further directions}

\noindent
\textbf{1.}  It seems relatively easy to describe the stratified structure 
of $C_\infmatA$ at least in case of generic $\infmatA$. 
In particular, in the Chebyshev case of Example \ref{ex:A} the set $C_\infmatA$ 
has the same stratification as a simplex of corresponding dimension. 
One can also understand the stratified structure of the sets $\Sigma_{p,q}$ 
introduced in Lemma~\ref{lm:triv}. Since each $C_\infmatA$ is obtained from a 
corresponding $\Sigma_{p,q}$ by intersecting it with an affine 
subspace the stratified structure of the former for generic $\infmatA$ is 
also describable. 
On the other hand, our Example~\ref{ex:B} seems to show  more 
complicated stratified structure  due to the presence of additional symmetry.  

\medskip
\noindent
\textbf{2.} We say that an (infinite) complex-valued matrix $\infmatA$  has a {\it 
weak univariate orthogonality property} if the sequence of characteristic
polynomials of its principal minors obeys the standard $3$-term recurrence
relation with complex coefficients. There is a straightforward version of this notion
for finite square matrices.  Obviously, any Jacobi matrix has this property. However,
it seems that for any $m\ge 3$ the set $WO_m\subset Mat(m,m)$ of all $m\times
m$-matrices with the latter property has a bigger dimension than the set $Jac_m
\subset Mat(m,m)$ of all Jacobi $m\times m$-matrices. 

\begin{problem}
Find the dimension of $WO_m$?
\end{problem}

\medskip
\noindent
\textbf{3.} Analogously, given a non-negative integer $n$, we say that  an
(infinite) complex-valued matrix $\infmatA$  has a {\it  weak $n$-variate
orthogonality property} if the above family
$\{P^I_\infmatA(x_0,x_1,\dotsc,x_n)\}$ (see Definition~\ref{deff}) satisfies the
3-term recurrence relation \rm{(2.2)} of Theorem~2.1 of \cite{Xu} with complex
coefficients.

There are many similarities between families
$\{P^I_\infmatA(x_0,x_1,\dotsc,x_n)\}$ and families of multivariate orthogonal
polynomials which by one of the standard  definitions of such polynomials also
satisfy  \rm{(2.2)} of Theorem~2.1 of \cite{Xu} with real coefficients.

Our computer experiments show that in this aspect the case $n>0$ is quite
different from the classical case $n=0$. In particular, we believe that the
following conjecture holds. 

\begin{conjecture} Given $n>0$, a banded matrix $\infmatA$  
has a \textit{weak $n$-variate orthogonality property}
if it is of the form
\[
\infmatA=
\begin{pmatrix}
a_0 & a_1 & a_2 & \dots & a_{n+1} & 0 & 0 & 0 & \dots \\
d_{-1} & d_0 & d_1 &  \dots & d_n & d_{n+1} & 0 & 0 & \dots  \\
0 & d_{-1} & d_0 &  \dots & d_{n-1} & d_{n} & d_{n+1} & 0 & \dots \\
0 & 0 & d_{-1} &  \dots & d_{n-2} & d_{n-1} & d_{n} & d_{n+1} & \dots \\
\vdots & \vdots & \vdots &  \ddots & \vdots & \vdots & \vdots & \vdots & \ddots \\
\end{pmatrix}, 
\]
where $a_0,\dotsc,a_{n+1},d_{-1},\dotsc,d_{n+1} \in \C.$
\end{conjecture}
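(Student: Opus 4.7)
The plan is to derive Xu's 3-term recurrence directly from determinantal identities, exploiting the precise bandwidth $[-1,n+1]$ of rows $2,3,\ldots$ of $\infmatA-\sum_s x_s \infmatI_s$. The crucial structural fact is that for any $(m{+}1)$-subset $I^+\subseteq\{1,\ldots,m{+}1{+}n\}$, the last row of the submatrix $A_{I^+}$ (i.e., row $m{+}1$ of $\infmatA$) carries at most $n{+}2$ nonzero entries, namely $d_{-1},d_0-x_0,\ldots,d_n-x_n$ in columns $m,m{+}1,\ldots,m{+}n{+}1$; the final Toeplitz entry $d_{n+1}$ would sit in column $m{+}n{+}2$, which lies outside the admissible range of $I^+$. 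This narrow support is the combinatorial engine behind the 3-term recurrence.

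Fix $I=(i_1<\cdots<i_m)$ with $i_m\le m{+}n$ and $r\in\{0,\ldots,n\}$, and (first assuming $m{+}r{+}1\notin I$) form $I^+:=I\cup\{m{+}r{+}1\}$. Laplace-expanding $\det A_{I^+}$ along row $m{+}1$ isolates the column containing $m{+}r{+}1$: its row-$(m{+}1)$ entry equals $d_r-x_r$, and the corresponding minor is $\pm P^I_\infmatA$. Rearranging yields
\[
x_r P^I_\infmatA \;=\; d_r P^I_\infmatA \mp P^{I^+}_\infmatA + \sum_k \varepsilon_k\, e_k\, P^{J_k}_\infmatA,
\]
where each $J_k=I^+\setminus\{i^+_k\}$ has $|J_k|=m$, each $\varepsilon_k\in\{\pm1\}$, and each $e_k$ is either a constant $d_s$ (when $i^+_k=m$) or a linear term $d_s-x_s$ (when $i^+_k=m{+}s{+}1$, $0\le s\le n$, $s\ne r$). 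The opposite case $m{+}r{+}1\in I$ is handled by an analogous construction where $I^+$ is obtained by adjoining a different admissible column, together with appropriate cofactor sign bookkeeping.

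The terms with constant $e_k$ already fit the required degree pattern (polynomials of degrees $m$ and $m{+}1$), but the ``mixed'' terms $x_s P^{J_k}_\infmatA$ with $s\ne r$ must be eliminated. I would apply the same Laplace-expansion procedure recursively to each $x_s P^{J_k}_\infmatA$, producing a finite linear system over the $(n{+}1)\binom{m+n}{m}$ variables $\{x_s P^J_\infmatA : 0\le s\le n,\ |J|=m\}$ with right-hand sides in $\mathrm{span}\{P^K_\infmatA : |K|\in\{m,m{+}1\}\}$. Inverting this system (which should be triangular in an appropriate ordering on pairs $(s,J)$ compatible with a lex-like rule on the $I$'s) expresses each $x_r P^I_\infmatA$ as a linear combination of $\{P^K_\infmatA\}_{|K|\in\{m,m+1\}}$ with scalar coefficients. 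A parallel Laplace expansion along row $1$, whose nonzero entries $a_0-x_0,\ldots,a_n-x_n,a_{n+1}$ encode the first-row perturbation, supplies the degree-$(m{-}1)$ contributions necessary to complete the 3-term recurrence in Xu's form.

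The principal obstacle is establishing two claims: (a) invertibility of the linear system, and (b) the absence of $P^K_\infmatA$-components with $|K|\le m{-}2$ in the final expressions. Both facts should be direct consequences of the precise bandwidth $[-1,n{+}1]$ specified in the conjecture: widening the band either below (to $d_{-2}$) or above (to $d_{n+2}$) would produce Laplace-minors of sizes $m{-}2$ or smaller, spoiling the 3-term form. I expect the most delicate combinatorial bookkeeping to be the interplay between the modified first row (the $a_i$ versus $d_i$) and the Toeplitz bulk below, which should generalize the initial-condition adjustment already present in the univariate Jacobi case when $(a_0,a_1)\ne(d_0,d_1)$: there the 3-term coefficients at $m=1$ differ from those at $m\ge 2$, a phenomenon entirely permissible within Xu's formulation since the coefficient matrices $A_{m,i},B_{m,i},C_{m,i}$ are allowed to depend on $m$.
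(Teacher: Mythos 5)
This statement is, in the paper, genuinely a \emph{conjecture}: the authors explicitly say that their belief rests on computer experiments, and they give no proof. So there is no paper argument to compare against. What can be assessed is whether your outline would, if completed, actually close the question --- and here there are concrete gaps beyond the two you already flag.

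The most serious issue is that Laplace expansion along row $m{+}1$ does not stay inside the family $\{P^K_\infmatA : K\subseteq\{1,\dots,|K|+n\}\}$. When $r=n$, forming $I^+=I\cup\{m{+}n{+}1\}$ and removing some other $c\in I\cap\{m,\dots,m+n\}$ produces $J_k=I^+\setminus\{c\}$ containing $m{+}n{+}1$, which is \emph{outside} the admissible column range for degree-$m$ basis polynomials. That $m\times m$ determinant is not one of your $\binom{m+n}{m}$ basis elements; it must itself be re-expanded along the nearly-empty column $m{+}n{+}1$ (whose only nonzero entry in rows $1,\dots,m$ is $d_{n+1}$ at row $m$), yielding $d_{n+1}$ times a degree-$(m{-}1)$ determinant. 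Crucially, this degree-$(m{-}1)$ determinant can again have a column index $m{+}n$ that is out of range for degree $m{-}1$, forcing yet another re-expansion and producing degree-$(m{-}2)$ determinants. So the very mechanism you rely on to stay within degrees $\{m{-}1,m,m{+}1\}$ naturally generates lower-degree terms, and ruling out their survival after the ``recursive elimination'' is exactly the hard part of the conjecture, not a consequence of the bandwidth.

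The recursive elimination itself is also underspecified. Each mixed term $x_s P^{J_k}$ spawns further mixed terms upon expansion, and you do not exhibit a well-founded ordering under which the process terminates, nor do you prove the linear system is triangular (let alone invertible). You correctly identify items (a) invertibility and (b) absence of degree $\le m-2$ contributions as the obstacles, but calling them ``direct consequences of the bandwidth'' is precisely what would need to be proved. Finally, your accounting attributes the degree-$(m{-}1)$ terms to the modified first row; as noted above, the column-truncation at $m{+}n{+}1$ is an independent and in fact dominant source of such terms, and the interaction between the two sources (the $a_i$ row at the top and the $d_{n+1}$ overhang at the right edge) is where I would expect the argument to either succeed or break. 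A complete proof along these lines would therefore need, at minimum, a precise induction on a degree-like complexity of the intermediate determinants together with an explicit cancellation lemma showing that all degree-$\le m-2$ contributions vanish --- none of which is present in the outline, and none of which the paper supplies either.
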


\end{document}